\begin{document}
\theoremstyle{plain}
\newtheorem{theorem}{Theorem}[section]
\newtheorem{definition}[theorem]{Definition}
\newtheorem{proposition}[theorem]{Proposition}
\newtheorem{lemma}[theorem]{Lemma}
\newtheorem{corollary}[theorem]{Corollary}
\newtheorem{example}[theorem]{Example}
\newtheorem{conjecture}[theorem]{Conjecture}
\newtheorem{remark}[theorem]{Remark}
\newtheorem{claim}[theorem]{Claim}
\errorcontextlines=0

\renewcommand{\P}{\mathcal{P}}
\newcommand{\Id}{\text{Id}}
\newcommand{\R}{\mathbb{R}}
\newcommand{\M}{\mathbf{H}^m}
\newcommand{\Op}{{\rm Op}}
\newcommand{\J}{\mathcal{J}}
\newcommand{\Z}{\mathbb{Z}}
\newcommand{\N}{\mathbb{N}}
\newcommand{\supp}{\text{supp}}
\newcommand{\ord}{\text{ord}}
\newcommand{\e}{\varepsilon}
\newcommand{\bqn}{\begin{equation}}
\newcommand{\eqn}{\end{equation}}
\newcommand{\al}{\alpha}
\newcommand{\tor}{(2\mathbb{T})^{2}}
\newcommand{\er}{{\overrightarrow{e_{r}}}}
\newcommand{\et}{{\overrightarrow{e_{\theta}}}}
\newcommand{\red}{\color{red}}
\newcommand{\black}{\color{black}}
\renewcommand{\i}{\iota}
\newcommand{\todo}[1]{$\clubsuit$ {\tt #1}}
\newcommand{\B}{\mathcal{B}}
\newcommand{\BST}{\textbf{(BST) }}
\newcommand{\BSCT}{\textbf{(BSCT) }}
\newcommand{\Green}{\textbf{(Green) }}
\newcommand{\reg}{p}
\newcommand{\num}{k}
\renewcommand{\Vec}{{\rm Vec}}
\newcommand{\man}{W}
\newcommand{\I}{\mathcal{I}}

\setcounter{tocdepth}{1}

\title[Generic controllability of equivariant systems]{Generic controllability of equivariant systems and applications to particle systems and neural networks}
\author{Andrei Agrachev}
\address{Andrei Agrachev. SISSA, via Bonomea 265, 34136 Trieste, Italy}
 \email{agrachevaa@gmail.com}

\author{Cyril Letrouit}
\address{Cyril Letrouit. Laboratoire de Math\'ematiques d'Orsay, Universit\'e
  Paris-Sud, Universit\'e Paris-Saclay, B\^atiment~307, 91405
  Orsay Cedex \& CNRS UMR 8628}
 \email{cyril.letrouit@universite-paris-saclay.fr}

\date{\today}

\maketitle

\begin{abstract}
There exist many examples of systems which have some symmetries, and which one may monitor with symmetry-preserving controls. Since symmetries are preserved along the evolution, full controllability is not possible, and controllability has to be considered inside sets of states with same symmetries. We prove that generic systems with symmetries are controllable in this sense. 

This result has several applications, for instance:  (i) generic controllability of particle systems when the kernel of interaction between particles plays the role of a mean-field control;  (ii)  generic controllability for families of vector fields on manifolds with boundary; (iii) universal interpolation for neural networks architectures with ``generic" self-attention-type layers - a type of layers ubiquitous in recent neural networks architectures, e.g., in the Transformers architecture.

The tools we develop could help address various other questions of control of equivariant systems.
\end{abstract}

\tableofcontents

\section{Introduction and main results}
\subsection{A motivating example}\label{s:motivation}
Given $n$ points at positions $x_1,\ldots,x_n \in \R^d$ (a point cloud), their empirical measure is the probability measure $\mu=\frac1n \sum_{j=1}^n \delta_{x_j}$. We are interested in time-dependent evolutions in the set of point clouds, and more precisely in controlling these evolutions, which amounts to operating control in the set of empirical measures. To monitor these evolutions, we control the interactions of the points, assumed to be of the form
\begin{equation}\label{e:evolK}
\forall i\in\{1,\ldots,n\}, \quad x_i(0)=x_i^0\in\R^d \; \; \text{and} \; \; \frac{d}{dt}x_i(t)=K_t\Bigl(x_i(t),\frac1n \sum_{j=1}^n \delta_{x_j(t)}\Bigr)
\end{equation}
where $(K_t)_{t\in \R}$ is our control, and for any time $t\in\R$, $K_t$ belongs to a given  time-independent family $\mathcal{K}$. This is a particular type of \emph{mean-field} control. 

Said with words, the influence felt by $x_i(t)$ and generated by the points $x_j(t)$, $j\neq i$, is given by the time-dependent kernel $K_t$ which we control. Each kernel $K\in\mathcal{K}$ depends both on $x_i(t)$ and all other positions $x_j(t)$, but not on their labels: $K_t$ writes $K_t(x_i(t),\mu(t))$ where $\mu(t)=\frac1n\sum_{j=1}^n \delta_{x_j(t)}$. This form of monitoring of particle systems evolution through control of the kernel arises for instance in neural networks architectures, as we will see later. 

If $\tau$ denotes the permutation $\tau=(ij)$ acting on $(\R^d)^n$ by permuting the $i$-th and $j$-th particles, then this action commutes with the evolution \eqref{e:evolK}. This observation implies the important property that the evolution \eqref{e:evolK} induces an evolution in the set of empirical measures; only positions of the points matter, not their labels. Moreover, \eqref{e:evolK} and the induced evolution of empirical measures necessarily preserve the mass of individual points, since 
\begin{equation}\label{e:equal0equalt}
x_i(0)=x_j(0) \Rightarrow \forall t\in\R, \ x_i(t)=x_j(t)
\end{equation}
(here and in the sequel, we assume that the evolution \eqref{e:evolK} is globally well-posed).

Given a fixed time $T>0$ and initial and final empirical measures $\mu^0, \mu^1$, our goal is to choose  $K_t\in\mathcal{K}$ for each $t\in[0,T]$ in a way that the solution to \eqref{e:evolK} with $\mu(0)=\mu^0$ satisfies $\mu(T)=\mu^1$. To achieve this goal, the following mass-preservation constraint must hold, due to \eqref{e:equal0equalt}: to each point in the support of $\mu^0$ must correspond a point in the support of $\mu^1$ with the same mass, and this correspondance must be one-to-one. Equivalently,
\begin{equation}\label{e:preseq}
\begin{aligned}
\exists x_1,\ldots, x_n, y_1,\ldots,y_n \in \R^d,  \text{ such } & \text{that } \mu^0=\frac1n\sum_{j=1}^n \delta_{x_j}, \  \mu^1=\frac1n\sum_{j=1}^n \delta_{y_j}, \\
\text{and for any } i,j \in \{1,\ldots,n\},& \ x_i=x_j \text{ if and only if } y_i=y_j.
\end{aligned}
\end{equation}

Our aim in this paper is to prove that if the constraint \eqref{e:preseq} is satisfied, this control problem is generically feasible. More precisely, if $\mathcal{K}$ contains at least two elements, and that  they are ``generic" enough, the condition \eqref{e:preseq} is the only constraint that initial and final data must satisfy to achieve our goal of sending $\mu^0$ to $\mu^1$ through an evolution of the form \eqref{e:evolK}.

 It turns out that this problem may be rephrased abstractly as a control problem in a manifold $M$ endowed with the action of a compact Lie group $G$. In the above example, $M=(\R^d)^n$ and $G=\mathfrak{S}_n$ is the symmetric group which acts on $M$ by permuting copies of $\R^d$ (see Example \ref{e:particles}). As already observed, permutations commute with the evolution \eqref{e:evolK}. In the general case, we consider only evolutions in $M$ following vector fields which are equivariant under the action of $G$ (see definition below). Consequently, motion in $M$ is constrained to remain inside some strata. In the above example, each stratum gathers points clouds with the same repartition of mass, i.e., two point clouds belong to the same stratum if and only if their empirical measures $\mu^0$ and $\mu^1$ satisfy \eqref{e:preseq}. In the general case, strata are given by connected components of sets of points in $M$ whose isotropy groups (=stabilizers) are in the same conjugacy class.

Solving this abstract control problem requires to dive into the subtle geometry of equivariant dynamical systems. As a result, it offers a wide range of applications, sometimes far from our original example of particle systems. Also, we believe that the tools we develop to solve this problem could be useful to address other questions related to control of equivariant systems.

Section \ref{s:mainresults} is devoted to the formulation of our main results, and Section \ref{s:applications} sketches some of its applications, which are developed in more detail in Section \ref{s:applic}.

\subsection{Main results}\label{s:mainresults}
Let $M$ be a real analytic\footnote{in this paper, all real analytic manifolds are assumed to be paracompact and second countable.} manifold and let $G$ be a compact Lie group acting analytically on $M$. $M$ is thus called an analytic $G$-manifold, and 
$$
M_G:=M/G=\{Gq \mid q\in M\}
$$
is the orbit space, each set $Gq$ being an orbit, or a $G$-orbit. The set of $C^\infty$ vector fields on $M$ is denoted by $\Vec(M)$. In the sequel, for $s\in\N$, we use the notation $[s]=\{1,\ldots,s\}$.

The isotropy group at $q\in M$ is
$$
G_q=\{g\in G \mid gq=q\}.
$$
Notice that if $q,q'$ belong to the same $G$-orbit, then $G_q$ and $G_{q'}$ are conjugate. The action of $G$ on $M$ induces a natural stratification
\begin{equation}\label{e:stratifM}
M_G=\bigsqcup_{i\in\I_G} S_G^i
\end{equation}
where for each $i\in \I_G$, $S_G^i$ is a connected component of the set of all orbits whose representatives have isotropy groups conjugate to\footnote{this does not depend on the representative} some given subgroup $H_i$ of $G$.

While reading the rest of this section, it might be helpful to keep in mind the example of Section \ref{s:motivation}, summarized here.
\begin{example}\label{e:particles} In Section \ref{s:motivation}, $G=\mathfrak{S}_n$ acts on $M=(\R^d)^n$ through 
$$
\sigma\cdot (x_1,\ldots,x_n)=(x_{\sigma(1)},\ldots,x_{\sigma(n)}).
$$
Then $M_G$ may be identified with the set of $n$-points empirical measures, i.e., probability measures of the form $\mu=\frac1n\sum_{j=1}^n \delta_{x_j}$. 
Two elements of $M_G$ belong to the same stratum in the decomposition \eqref{e:stratifM} if the corresponding empirical measures, denoted for instance by $\mu^0$ and $\mu^1$, satisfy \eqref{e:preseq}. The number of strata is finite.
\end{example}
The stratification \eqref{e:stratifM} is sometimes called ``stratification by isotropy types". As recalled in Section \ref{s:densitytransverse}, $\I_G$ is countable and each stratum $S_G^i$ is a smooth submanifold. 

Given $g\in G$ we define the diffeomorphism $P_g$ on $M$ by $P_gq=gq$. The pushforward $Y=(P_g)_*X$ of a vector field $X$ through $P_g$ is given by $Y(q)=(dP_g)_{P_g^{-1}(q)}(X(P_g^{-1}(q)))$ for any $q\in M$. Equivariant vector fields are those which are compatible with the action of $G$: $X\in\Vec(M)$ is equivariant under the action of $G$ if for any $g \in G$ there holds
\begin{equation}\label{e:defequivariant}
(P_g)_* X=X.
\end{equation}
Any equivariant vector field induces a vector field on $M_G$. We denote by $\Vec^G(M_G)$ the set of vector fields induced on $M_G$ by $C^\infty$ equivariant vector fields on $M$. This set is equipped with the (induced) $C^\infty$ topology on compact sets, whose definition is recalled in Section \ref{s:stronger}. 

We are interested in controllability properties in the quotient set $M_G$, with vector fields in $\Vec^G(M_G)$. We will see in Section \ref{s:proofgenequiv} that this is tightly related, but not equivalent, with controllability in $M$ with equivariant vector fields (controllability in $M_G$ is slightly weaker). We prove in Lemma \ref{l:fundamental} that any element of $\Vec^G(M_G)$ is tangent to strata defined in \eqref{e:stratifM}. Therefore, any integral curve of $\Vec^G(M_G)$ is contained in a single stratum, which implies that it is not possible to connect points $q,q'\in M_G$ by integral curves of $\Vec^G(M_G)$ if $q,q'$ do not belong to the same stratum. However, it might be possible to connect them if the two points $q,q'$ belong to the same stratum. This observation motivates the following definition:

\begin{definition}[Controllability in strata]\label{d:continstrata}
Let $X_1,\ldots,X_k\in\Vec^G(M_G)$. We say that controllability holds  in strata if for any $q,q'\in M_G$ belonging to the same stratum in \eqref{e:stratifM}, there exist $m\in\N$, $t_1,\ldots,t_m\in \R$ and $i_1,\ldots,i_m\in[k]$ (not necessarily distinct) such that 
$$
q'=e^{t_1X_{i_1}}\circ \ldots \circ e^{t_mX_{i_m}}q
$$
where $e^{tX}$ denotes the flow at time $t$ of the vector field $X$ on $M_G$.
\end{definition}

Our first main result is the following:
\begin{theorem}\label{t:genequiv}
There exists for any $k\geq 2$ a set of $k$-uples $(X_1,\ldots,X_k)\in (\Vec^G(M_G))^k$ which is residual in $(\Vec^G(M_G))^k$ and for which controllability holds in strata.
\end{theorem}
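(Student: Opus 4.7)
The plan is a classical Baire-category argument adapted to the equivariance constraint, carried out stratum-by-stratum.

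Since $\I_G$ is countable and a countable intersection of residual sets is residual, it suffices to produce, for each stratum $S_G^i$, a residual set of $k$-tuples that controls between any two points of $S_G^i$. Each $S_G^i$ is connected by definition and each $X\in\Vec^G(M_G)$ is tangent to it by Lemma \ref{l:fundamental}. Because the times in Definition \ref{d:continstrata} may be negative, the reachable set from a point $q$ equals the orbit of $q$ under the pseudogroup generated by the flows of the $X_{i_j}$. By Sussmann's orbit theorem applied to the connected smooth manifold $S_G^i$, if $\mathrm{Lie}(X_1,\ldots,X_k)(q)=T_qS_G^i$ at \emph{every} $q\in S_G^i$, then every orbit is open, hence by connectedness a single orbit exhausts $S_G^i$ and controllability on $S_G^i$ holds.

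Exhaust $S_G^i$ by compacts $(K_{i,n})_{n\geq 1}$ and set
$$\mathcal{O}_{i,n}=\Bigl\{(X_1,\ldots,X_k)\in(\Vec^G(M_G))^k : \mathrm{Lie}(X_1,\ldots,X_k)(q)=T_qS_G^i,\ \forall q\in K_{i,n}\Bigr\}.$$
Since iterated brackets of any fixed order depend continuously on the $X_j$ in the $C^\infty$ topology and $K_{i,n}$ is compact, each $\mathcal{O}_{i,n}$ is open in $(\Vec^G(M_G))^k$. If each $\mathcal{O}_{i,n}$ is also dense, then $\bigcap_{i,n}\mathcal{O}_{i,n}$ is the desired residual set: any of its elements achieves full Lie rank at every point of every stratum, hence controllability in strata by the previous paragraph.

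Density of $\mathcal{O}_{i,n}$ is the main step. The algebraic input is the surjectivity of the evaluation map $\mathrm{ev}_{[q]}:\Vec^G(M_G)\to T_{[q]}S_G^i$ at every $[q]\in S_G^i$, a consequence of the slice theorem: setting $H=G_q$, any $H$-invariant vector on a slice through $q$ extends to a $G$-equivariant vector field on $M$ via local extension, averaging over $G$, and an equivariant partition of unity. Given this, I would adapt the standard generic-bracket-generation argument: cover $K_{i,n}$ by finitely many slice charts trivializing $S_G^i$; on each chart, the condition that $\mathrm{Lie}(X_1,\ldots,X_k)$ has full rank everywhere becomes a finite-jet nondegeneracy statement on the $X_j$. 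Using $k\geq 2$ (so that nontrivial brackets $[X_i,X_j]$ are available) together with a jet-level analog of the surjectivity of $\mathrm{ev}_{[q]}$ (again via the slice theorem applied to jet spaces), one perturbs the $X_j$ inside $\Vec^G(M_G)$ by small equivariant test fields so that the relevant Taylor coefficients become generic on each chart, yielding full Lie rank. The main obstacle is precisely this density step: one must guarantee that the perturbations remain inside $\Vec^G(M_G)$ while still being rich enough to realize arbitrary jets in the $H$-invariant directions. Once this jet-level surjectivity is established, a standard transversality argument closes the density, and the Baire argument above finishes the theorem.
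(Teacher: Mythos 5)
Your overall architecture (Baire category over a compact exhaustion, openness of the full-Lie-rank condition on compacts, surjectivity of the evaluation map via the slice theorem plus averaging, and Orbit theorem/Chow--Rashevskii to pass from bracket generation to controllability) matches the skeleton of the paper's argument. But the step you yourself flag as ``the main obstacle'' --- density --- is exactly the mathematical content of the theorem, and your sketch of it does not go through as stated. You propose to realize ``arbitrary jets in the $H$-invariant directions'' by equivariant perturbations and then invoke a standard transversality argument. Equivariance, however, constrains not just the value of a vector field at a point $q$ with isotropy $H$ but its entire Taylor expansion: the admissible jets form the proper subspace of $H$-equivariant jets (e.g.\ for $\Z_2$ acting on $\R^2$ by $(x,y)\mapsto(x,-y)$, the first component of an equivariant field must be even in $y$ and the second odd). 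So jet-level surjectivity onto the invariant directions is false in general, and the ``standard generic-bracket-generation'' argument of Lobry/Agrachev--Sarychev cannot be imported as a black box; one would instead need genuine equivariant jet transversality, which the paper deliberately avoids precisely because of these subtleties.

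The paper replaces this step by a constructive argument carried out upstairs on $M$: it works in the module $\mathcal{A}$ generated over $G$-invariant smooth functions by \emph{analytic} equivariant fields, uses the Noetherian property of analytic germs (Frisch) to get local finite generation with a number of generators $m$ depending only on a compact set $\mathcal{K}$, first proves (Lemma \ref{l:septransverse}, a codimension count in $\mathcal{A}^k$, using $k\geq 2$) that generically some $X_{i}$ is transverse to the $G$-orbits off $M'$, and then (Lemma \ref{l:inoneOi}) builds an explicit small perturbation $Z$ of a second field in flow-out tubular coordinates $e^{tX}\Sigma$, of Taylor-polynomial form \eqref{e:Zfunctionalpha}, so that ${\rm ad}_X^0(Y+Z),\ldots,{\rm ad}_X^{p}(Y+Z)$ span all equivariant directions; genericity is reduced to a finite-dimensional rank condition on a constant matrix $\alpha$, using that averaging commutes with brackets. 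Two further points your sketch leaves open are also handled by this structure: the bracket order $p$ is uniform on each compact (it depends only on $m$ and $\dim M$), which is what lets the countably many successive perturbations be summed with $\|Z_j\|_{C^p}\leq\varepsilon 2^{-j}$ into a single small perturbation in the $C^\infty$ topology on compact sets; and the locus $M'$ where all equivariant fields are tangent to orbits must be treated separately (Corollary \ref{c:M'/G} shows the corresponding strata in $M_G$ are isolated points, so controllability there is vacuous). Without a proof of your ``jet-level surjectivity'' --- or a substitute like the module-theoretic construction above --- the density step, and hence the theorem, is not established.
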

This roughly means that controllability in strata holds for ``almost any" $k$-uples of vector fields in $\Vec^G(M_G)$. Since $\Vec^G(M_G)$ is infinite-dimensional, residual sets are an appropriate framework to state ``almost-sure" properties. 

We actually prove a slightly stronger statement than Theorem \ref{t:genequiv}; namely, we prove the controllability in the leaves of the foliation generated by the equivariant fields on $M\setminus M'$, where $M'$ is defined in Section \ref{s:proofgenequiv}. The precise statement is given in Theorem \ref{t:precisedstatement}. 

In view of applications\footnote{The main application we have in mind here is to self-attention layers of neural networks, see Section \ref{s:selfattention}. Neural networks with self-attention layers are designed to map billions of sequences to billions of target sequences. Self-attention layers are implemented for instance in the Transformers architecture \cite{vaswani}, whose success in machine learning calls for mathematical explanations.}, it is natural to extend our result to the simultaneous control of $N$ points in $M_G$. Simultaneous control (also called ``ensemble control") means that with a single control that is shared by all $N$ points in $M_G$, we seek to drive the $N$ initial points to their $N$ respective targets (see \cite{agrachevsarychev}). In Example \ref{e:particles}, this means driving $N$ empirical measures to $N$ other empirical measures, evolving each of them independently (i.e. the empirical measures do not interact with each other), but with the same interaction kernel $K_t$ which may depend on time.

\begin{definition}[Simultaneous controllability in strata]\label{d:continstrataensemble}
Let $X_1,\ldots,X_k\in\Vec^G(M_G)$ for some $k\geq 2$. We say that simultaneous controllability in strata (of dimension $\geq 2$) holds if for any  $N\in\mathbb{N}$, any $q_1,\ldots,q_N$, $ q_1',\ldots,q_N'\in M_G$ satisfying:
\begin{enumerate}[(i)]
\item for any distinct $i,j\in[N]$, $q_i\neq q_j$ and $q_i'\neq q_j'$
\item for any $j\in[N]$, $q_j$ and $q_j'$ belong to the same stratum in \eqref{e:stratifM}, and the dimension of this stratum is $\geq 2$
\end{enumerate}
the following conclusion holds: there exist $m\in\N$, $t_1,\ldots,t_m\in \R$ and $i_1,\ldots,i_m\in[k]$ (not necessarily distinct) such that 
$$
\forall j\in[N], \qquad q_j'=e^{t_1X_{i_1}}\circ \ldots \circ e^{t_mX_{i_m}}q_j.
$$
\end{definition}

We prove the following result, which  is a generalization to the equivariant framework of \cite[Theorem 3.2]{agrachevsarychev}:
\begin{theorem}\label{t:Nfold}
For any $k\geq 2$, there exists a set of $k$-uples of equivariant $C^\infty$-vector fields $(X_1,\ldots,X_k)$ which is residual in $(\Vec^G(M_G))^k$, and for which simultaneous controllability in strata holds.
\end{theorem}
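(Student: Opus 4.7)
The plan is to deduce Theorem \ref{t:Nfold} from an $N$-fold analogue of Theorem \ref{t:genequiv} applied to the diagonal action of $G$ on $M^N$, and then take a countable intersection over $N$ to obtain a single residual set that works simultaneously for every $N$. Fix $N\in\N$ and consider the open $G$-invariant subset $U_N\subset M^N$ consisting of $N$-tuples with pairwise distinct entries, equipped with the diagonal $G$-action. Any $X\in\Vec^G(M_G)$ lifts to the diagonal equivariant vector field $\Delta X$ on $U_N$ defined by $\Delta X(q_1,\ldots,q_N)=(X(q_1),\ldots,X(q_N))$, which descends to $U_N/G$. The key structural point is that a $k$-uple $(X_1,\ldots,X_k)\in(\Vec^G(M_G))^k$ is required to yield simultaneous controllability only through the \emph{diagonal} lifts $(\Delta X_1,\ldots,\Delta X_k)$; the residual set has to be found inside $(\Vec^G(M_G))^k$, not inside the larger space $(\Vec^G((U_N)_G))^k$, so Theorem \ref{t:genequiv} cannot be invoked as a black box.

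I would first check that under the hypotheses of Definition \ref{d:continstrataensemble}, the tuples $(q_1,\ldots,q_N)$ and $(q_1',\ldots,q_N')$ lie in a common leaf of the foliation generated by diagonal equivariant lifts on $U_N/G$, reducing the $N$-fold control problem to a controllability-in-strata statement for the lifted system. Then I would rerun the transversality and density arguments underlying the proof of Theorem \ref{t:genequiv} (more precisely of its refinement Theorem \ref{t:precisedstatement}) with the vector fields constrained to the diagonal subspace $\Delta(\Vec^G(M_G))\subset\Vec^G((U_N)_G)$. Since for admissible tuples the orbits $Gq_1,\ldots,Gq_N$ are pairwise disjoint compact subsets of $M$, one can build $G$-invariant bump functions localized near a single orbit $Gq_j$ and identically zero near the others; multiplying an equivariant vector field by such bumps produces equivariant perturbations whose diagonal lift acts nontrivially only near the $j$-th coordinate of the tuple, restoring the independence of perturbations at each point despite the diagonal constraint.

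The assumption that each stratum containing a $q_j$ has dimension $\geq 2$ is used precisely at this step: it guarantees that within each stratum there are at least two linearly independent tangent directions that a perturbed equivariant vector field can span at a generic point, which is what makes the transversality argument on $U_N/G$ succeed uniformly in $N$. With one-dimensional strata the flows of two distinct points lying in the same stratum cannot be separated, so no perturbation could disentangle them. Combining these ingredients produces, for every fixed $N\in\N$, a residual subset $\mathcal{R}_N\subset(\Vec^G(M_G))^k$ on which simultaneous $N$-fold controllability in strata of dimension $\geq 2$ holds. To conclude, I would take the countable intersection $\bigcap_{N\in\N}\mathcal{R}_N$, which is residual by the Baire property of $\Vec^G(M_G)$ in the $C^\infty$ topology, yielding the claimed residual set on which simultaneous controllability in strata holds for every $N$.

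The main obstacle is the intermediate step of adapting the transversality argument to the diagonal constraint: in the single-point case of Theorem \ref{t:genequiv} one freely perturbs the vector field near a single orbit, whereas the $N$-point variant requires jointly transversal perturbations at $N$ orbits while using a single vector field on $M$. The combination of pairwise distinctness (to localize perturbations on disjoint orbits via $G$-invariant bumps) and dimension $\geq 2$ of strata (to have enough perturbation directions at each point) is exactly what is needed to carry the single-point machinery through to the $N$-fold setting.
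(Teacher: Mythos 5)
Your overall architecture is the same as the paper's: work with the $N$-fold (diagonal) lifts of equivariant fields on the configuration space of $N$-tuples, exploit that admissible tuples project to distinct points of $M_G$ so that the orbits $Gq_1,\ldots,Gq_N$ are pairwise disjoint and perturbations of a \emph{single} field on $M$ can be localized by $G$-invariant cutoffs near each orbit separately, rerun the genericity/transversality machinery of Theorem \ref{t:genequiv} for the lifted system, conclude by Chow--Rashevskii, and intersect over $N$. Two remarks on the set-up: the right configuration space is the set of tuples with pairwise \emph{disjoint $G$-orbits} (distinctness in $M_G$), not merely pairwise distinct entries of $M^N$ --- at a tuple with $q_j\in Gq_i$, $i\neq j$, equivariance forbids the diagonal lifts from generating the full product, so the density argument must be run on the disjoint-orbit set, as in the paper's $M^{(N)}$.

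The genuine gap is in where you use the hypothesis that the strata have dimension $\geq 2$, and in the step you leave as a ``check''. The bracket-generation statement for the $N$-folds at tuples with pairwise disjoint orbits (the analogue of Theorem \ref{t:genliebracketbli}) holds with \emph{no} restriction on the dimension of the strata: in a one-dimensional stratum two distinct points can still be moved essentially independently by generic equivariant fields (evaluations of a large Lie algebra at distinct points span), so the transversality argument does not need, and is not saved by, the dimension assumption; your claim that ``no perturbation could disentangle them'' in dimension one is not the actual obstruction. The dimension $\geq 2$ hypothesis enters only at the final passage from bracket generation to controllability: one must know that the initial and target tuples can be joined inside the set of pairwise distinct tuples lying in the product of the relevant strata (equivalently, that they lie in the same orbit/leaf of the lifted family). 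This is a connectivity statement: the collision sets $\{q_i=q_j\}$ have codimension equal to the dimension of the common stratum, so removing them preserves connectedness precisely when that dimension is $\geq 2$; in dimension $1$ the ordering of the points along the stratum is preserved by any admissible flow, and tuples with different orderings cannot be connected even though the lifted system is bracket generating. In your proposal this is exactly the unproven ``I would first check that the tuples lie in a common leaf'' step, while the hypothesis that would justify it is spent (incorrectly) on the transversality argument; as written, the reduction from Definition \ref{d:continstrataensemble} to the lifted Chow--Rashevskii argument is therefore incomplete.
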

\cite[Theorem 3.2]{agrachevsarychev} can be recovered by taking $G$ reduced to the identity. Our proof of Theorem \ref{t:Nfold} (and of Theorem \ref{t:genequiv}) is constructive, whereas the proof of \cite[Theorem 3.2]{agrachevsarychev} relied on the multijet transversality theorem as a black-box.

\subsection{Applications}\label{s:applications}
Theorems \ref{t:genequiv} and \ref{t:Nfold} have various applications, which are developed in Section \ref{s:applic}. 
\begin{enumerate}[(i)]
\item \emph{Control in manifolds with boundary} (Section \ref{s:boundary}). Indeed, any manifold with boundary may be written as the quotient of a manifold without boundary by a reflection. This application of Theorem \ref{t:genequiv} is technically the simplest where $G$ is non-trivial, since $G$ is just $\Z/2\Z$. 
\item  \emph{Control of the spectrum of symmetric (or Hermitian) matrices} (Section \ref{s:quantum}). We apply Theorem \ref{t:genequiv} to the case where $M$ is the set of symmetric matrices, and $G$ is the orthogonal group, acting by conjugation on $M$. Each element $A$ of $M/G$ may be identified with the spectrum of any of its representatives (=symmetric matrices), i.e., the collection of eigenvalues seen up to permutations. The stratum to which $A$ belongs depends on the cardinality of each packet of coincident eigenvalues of $A$.
\item \emph{Control of particle systems} (Section \ref{s:mn}). This covers the example presented in Section \ref{s:motivation}, and its generalizations. In this case the particles live in some manifold $W$, so $M=W^n$, and $G=\mathfrak{S}_n$ (the particles are indistinguishable). The orbit space $M_G$ is identified with the set of $n$-points empirical measures on $W$.
\item \emph{Universal interpolation for generic self-attention layers in neural networks} (Section \ref{s:selfattention}). This can be framed as a particular case of the previous application. In this case each element of $M_G$ represents for instance a sentence, each element of $W$ is a word embedding (``a token"), and the time-evolution corresponds to evolution across layers. The goal explained in Section \ref{s:motivation} of sending the initial empirical measure to the target one typically represents a translation task, realized sentence by sentence (and not word by word).
\item \emph{Control of quantum systems with symmetries}, notably symmetric Ising spin networks of $n$ spin $\frac12$ particles (Section \ref{s:quantumspin}). These networks of $n$ states evolve according to Hamiltonians which are invariant under permutations of the spins. This is a particular case of application (iii), in the case where $W$ is the unit sphere of $\mathbb{C}^2$, in which spins live.
\end{enumerate}

\subsection{Open questions}\label{s:open}
Here are a few open questions which we believe of particular interest:
\begin{enumerate}
\item generalize Theorem \ref{t:genequiv} to the case where $G$ is not compact. Natural examples are the groups of translations and homotheties (centered at the origin) in Euclidean spaces. 
\item generalize Theorem \ref{t:genequiv} (or the simpler theorem by Lobry \cite{lobry}, see Section \ref{s:biblio}) to the case where $M$ has infinite dimension. This would possibly have applications to control of measures, of diffeomorphisms, and control of the spectrum of general self-adjoint or Hermitian operators of infinite dimension. There already exist in the literature results on the genericity of the controllability of infinite-dimensional quantum systems, seen as a Schr\"odinger PDE, see for instance \cite{mason}.
\item in the present paper, we only study the controllability problem, but the \emph{optimal control} problem is also certainly worth studying, for instance for an ensemble of $N$ points on $M_G$. This is natural in view of application to neural networks architectures. See \cite{scagliotti} in the case where there is no group acting on $M$.
\end{enumerate}
Also, let us mention here that our assumption of analyticity on $M$ and $G$ (which is satisfied in all natural examples) is technical. Although we do not know how to avoid it, we do not believe that this assumption is fundamental.

\subsection{Bibliography} \label{s:biblio}
The idea of proving controllability for ``generic dynamics" as in Theorem \ref{t:genequiv} is not new: in \cite{lobry}, Lobry proved that for a generic family of $k\geq 2$ vector fields on a connected manifold $M$ without boundary, any couple of points of $M$ may be connected by an integral path of the family. An outcome of our approach is an extension of Lobry's result to manifolds with boundary, see Corollary \ref{c:withboundary}. Lobry's paper has been extended in \cite{agrachevsarychev} to the case of ensembles of points on $M$ evolving according to a shared open loop control; the points are not interacting with each other but they are driven by the same control. This has direct applications to universal interpolation for so-called ``neural ODEs". Theorem \ref{t:Nfold} in the present work generalizes \cite[Theorem 3.2]{agrachevsarychev} to the equivariant framework, which is relevant among other applications to neural networks equipped with self-attention layers (see Section \ref{s:selfattention}). Compared to \cite{lobry} and \cite{agrachevsarychev}, our proofs are constructive and do not use (multi-)jet transversality. Also, the fact that $M$ is endowed with a group action is not a mere additional technicality: our proofs require a detailed understanding of the structure of orbits and strata of $G$-manifolds, partly based on the so-called slice theorem. 

It is important to mention that there already exists a vast literature on equivariant dynamical systems, see e.g. the book \cite{fieldbook} for a detailed account. Controllability and observability of equivariant dynamical systems, which are basically systems with symmetries, are well-developed subjects, see for instance \cite{bonnabel1}, \cite{bonnabel2}, and \cite{mahony} for a review. To our knowledge, generic controllability has never been studied in this framework, and the applications which we propose are original. We believe our fine analysis of equivariant dynamics (e.g., Lemma \ref{l:fundamental}) is of independent interest and could be useful to address other questions related to control of equivariant systems.

More references on applications of our results are given in Section \ref{s:applic}.

\subsection{Organization of the paper.} The applications sketched in Section \ref{s:applications} are developed in Section \ref{s:applic}. 

We prove Theorem \ref{t:genequiv} in Section \ref{s:proofgenequiv}. For this, we do not use equivariant transversality theory\footnote{see \cite{fieldbook} for an exhaustive treatment of this theory.}. Using this theory might seem to be a natural lead since the papers \cite{lobry}, \cite{agrachevsarychev} rely on transversality theory, but it turns out to be very cumbersome since equivariant transversality theory has many subtleties. Instead, our proofs rely on averaging techniques and codimension computations, and, as already said, require a detailed understanding of the structure of orbits and strata of $G$-manifolds, partly based on the so-called slice theorem.

In Section \ref{s:ensembles}, we prove Theorem \ref{t:Nfold}, which is a generalization of the proof of Theorem \ref{t:genequiv}.

\subsection{Acknowledgments.} We would like to thank Claude Viterbo and Mike Field for early discussions on this project, and Domenico d'Alessandro, Benjamin Apffel, Ugo Boscain, Thomas Iadecola, Tony Jin, Antoine Levitt and Eugenio Pozzoli for discussions about quantum evolutions and quantum control. We also thank Alessandro Scagliotti for feedbacks on a previous version of this manuscript. This project has received funding from the European Research Council (ERC) under the European Union's Horizon 2020 research and innovation programme (grant agreement No. 945655). The second author would like to thank Luca Rizzi for his kind invitation to the SISSA in January 2024, where part of this work was carried out. 

\section{Applications}\label{s:applic}

This section develops the applications which have been sketched in Section \ref{s:applications}.
\subsection{Manifolds with boundary}\label{s:boundary}
Let $\widetilde{M}$ be a real-analytic manifold with smooth boundary $\partial \widetilde{M}\neq \emptyset$. We denote by $\mathcal{U}$ the set of $C^\infty$-vector fields on $\widetilde{M}$, defined up to the boundary $\partial \widetilde{M}$, and which are tangent to $\partial \widetilde{M}$. This set is endowed with the $C^\infty$ topology on compact sets. Corollary \ref{c:withboundary} asserts that for a generic $k$-uple ($k\geq 2$) of elements of $\mathcal{U}$, any two points belonging either to the same connected component of $\partial\widetilde{M}$ or to the same connected component of the interior of $\widetilde{M}$ may be connected by an integral path of the vector fields. This result is a generalization to manifolds with boundary of a result due to Lobry \cite{lobry} (which does not require the analyticity assumption).
\begin{corollary}[Generic control on manifolds with boundary]  \label{c:withboundary}
Let $\widetilde{M}$ be a real-analytic and connected manifold with smooth boundary $\partial \widetilde{M}$. Then for any integer $\num\geq 2$ there exists a residual set of $k$-uples $(X_1,\ldots,X_k)\in\mathcal{U}^\num$ for which the following property holds. For any $q,q'$ belonging either to the same connected component of $\partial \widetilde{M}$ or to the same connected component of the interior of $\widetilde{M}$, there exist $m\in\N$, $t_1,\ldots,t_m\in \R$ and $i_1,\ldots,i_m\in[k]$ (not necessarily distinct) such that $q'=e^{t_1X_{i_1}}\circ \ldots \circ e^{t_mX_{i_m}}q$.
\end{corollary}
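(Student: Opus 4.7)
The plan is to realize $\widetilde{M}$ as the orbit space $M/G$ of an analytic $G$-manifold $M$ with $G=\mathbb{Z}/2\mathbb{Z}$, and then invoke Theorem~\ref{t:genequiv} directly. The manifold $M$ will be the analytic double of $\widetilde{M}$: two copies $\widetilde{M}_+$ and $\widetilde{M}_-$ of $\widetilde{M}$ glued along $\partial \widetilde{M}$ through an analytic collar neighborhood, with $G$-action generated by the involution $\sigma$ swapping the two copies. The fixed-point set of $\sigma$ is exactly $\partial \widetilde{M}$, so under the identification $M/G\simeq \widetilde{M}$ the stratification \eqref{e:stratifM} becomes the decomposition of $\widetilde{M}$ into the connected components of $\partial\widetilde{M}$ (isotropy equal to $G$) and the connected components of its interior (trivial isotropy), which is precisely the decomposition appearing in the statement.

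Next I would identify $\Vec^G(M_G)$ with $\mathcal{U}$. In boundary-adapted coordinates $(x,y)$ with $\partial\widetilde{M}=\{y=0\}$ and $\sigma(x,y)=(x,-y)$, a vector field $X=\sum a_i(x,y)\partial_{x_i}+b(x,y)\partial_y$ is equivariant if and only if the $a_i$ are even in $y$ and $b$ is odd in $y$; in particular $b(x,0)=0$, so $X$ is tangent to $\partial \widetilde{M}$. Conversely, any $C^\infty$ vector field on $\widetilde{M}$ tangent to $\partial \widetilde{M}$ admits an even/odd symmetric extension across the boundary to a $C^\infty$ equivariant vector field on $M$. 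This yields a bijection between $\mathcal{U}$ and $\Vec^G(M_G)$ which is a homeomorphism for the $C^\infty$-topologies on compact sets.

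Combining these two steps, applying Theorem~\ref{t:genequiv} to $(M,G)$ furnishes a residual subset of $(\Vec^G(M_G))^k$ on which controllability in strata holds; its preimage in $\mathcal{U}^k$ under the homeomorphism of the previous paragraph is again residual, and the controllability-in-strata conclusion translates, through the identifications above, into exactly the statement of the corollary.

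The main technical obstacle is the analytic aspect of the doubling construction: since $\partial \widetilde{M}$ is only assumed to be smooth, I would first apply an analytic approximation theorem of Grauert/Whitney type to replace $\partial \widetilde{M}$ by a $C^\infty$-nearby analytic hypersurface (equivalently, to build a genuinely analytic collar of the boundary), so that the glued double $M$ and the involution $\sigma$ are real-analytic and Theorem~\ref{t:genequiv} applies as stated. Once this analytic doubling is in place, the remaining verifications -- the vector-field correspondence, the matching of topologies, and the stratum identification -- are local and routine in boundary-adapted coordinates.
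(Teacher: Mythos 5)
Your overall route coincides with the paper's: form the double $M$ of $\widetilde{M}$, let $G=\Z/2\Z$ act by the reflection exchanging the two copies, identify $M/G$ with $\widetilde{M}$ so that the strata become the connected components of $\partial\widetilde{M}$ and of the interior, and then apply Theorem \ref{t:genequiv}. Your concern about the analytic structure of the double is legitimate (the paper does not dwell on it), and your proposed fix is in the right spirit.

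The step that fails as written is the claimed identification $\mathcal{U}\simeq\Vec^G(M_G)$ by even/odd reflection. It is not true that every $C^\infty$ field on $\widetilde{M}$ tangent to $\partial\widetilde{M}$ extends to a $C^\infty$ equivariant field on the double: equivariance forces the extension of a tangential coefficient $a(x,y)$ to be $a(x,|y|)$, which is smooth only when all odd-order normal derivatives of $a$ vanish at $y=0$ (for instance $y\,\partial_{x_1}$ reflects to $|y|\,\partial_{x_1}$, which is not $C^1$). Hence, with your half-space identification, the induced fields form a proper closed subspace of $\mathcal{U}$ which is not dense (its elements satisfy $\partial_y a(x,0)=0$), namely the fields whose coefficients are smooth in $(x,y^2)$ with normal component of the form $y\,B(x,y^2)$; a residual set inside $(\Vec^G(M_G))^k$ is then meager in $\mathcal{U}^k$, so residuality does not transfer through the homeomorphism you assert, and that homeomorphism does not exist. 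The identification can be repaired by using the invariant-theoretic description of the quotient instead: by Whitney's theorem on even functions, parametrizing the quotient near the boundary by the invariants $(x,u)$ with $u=y^2$, the induced fields become exactly $\sum_i A_i(x,u)\partial_{x_i}+2u\,B(x,u)\partial_u$ with $A_i,B$ arbitrary smooth, i.e.\ precisely the smooth fields tangent to the boundary; composing with a collar diffeomorphism of this model onto $\widetilde{M}$ then gives the desired correspondence with $\mathcal{U}$ and lets Theorem \ref{t:genequiv} yield the corollary. The paper leaves this identification implicit; your proposal makes it explicit but via the naive even/odd extension, and in that form the key step is incorrect.
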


Let us explain how Corollary \ref{c:withboundary} follows from Theorem \ref{t:genequiv}. To any compact manifold $\widetilde{M}$ with smooth boundary is naturally associated a $G$-manifold $M$, with $G=\mathbb{Z}_2$ (here $\mathbb{Z}_2=\mathbb{Z}/2\mathbb{Z}$), constructed as follows. First, the double $M$ of $\widetilde{M}$ is formed by gluing together two copies of $\widetilde{M}$ along their common boundary. There is a natural action of $\mathbb{Z}_2$ by reflection on the manifold $M$ fixing the common boundary, and sending each point of the first copy of $\widetilde{M}$ to the same point in the second copy of $\widetilde{M}$, and vice versa. Then, the quotient space $M/\mathbb{Z}_2$ can be identified with $\widetilde{M}$. 

The strata of $\widetilde{M}\simeq M/\mathbb{Z}_2$ are the connected components of the boundary and of the interior of $\widetilde{M}$. 
As an illustration, if $\widetilde{M}$ is a disk, then $M$ is a 2-dimensional sphere, and the set of fixed points through the mirror action is an equator of $M$. Equivariant vector fields on $M$ are tangent to this equator. Then Theorem \ref{t:genequiv} applied to $\widetilde{M}$ yields Corollary \ref{c:withboundary}. Of course, there exists an ``ensemble version" of Corollary \ref{c:withboundary}, which follows from  Theorem \ref{t:Nfold}.

\subsection{Spectrum of matrices}\label{s:quantum} 
Our results also have applications to control of symmetric and Hermitian matrices.
Let us consider the natural action of the orthogonal group $G=\mathcal{O}_n$ on the space of symmetric matrices $\mathcal{S}_n$: any $P\in \mathcal{O}_n$ acts by $\mathcal{S}_n\ni S\mapsto PSP^{\top}$. By diagonalization of symmetric matrices, we identify an element of $\mathcal{S}_n/\mathcal{O}_n$ with the empirical measure of the eigenvalues of any of its representatives in $\mathcal{S}_n$, i.e., 
$$
\mathcal{S}_n/\mathcal{O}_n\simeq \R^n/\mathfrak{S}_n\simeq \mathcal{M}_n(\R)
$$
where $\mathfrak{S}_n$ denotes the symmetric group over $n$ elements, acting on $\R^n$ by permuting the coordinates, and $\mathcal{M}_n(\R)$ is the set of empirical measures over $n$ points in $\R$, that is, the set of probability measures on $\R$ of the form $\mu=\frac1n \sum_{j=1}^n \delta_{x_j}$.

Given $S\in \mathcal{S}_n/\mathcal{O}_n$, we denote by $\lambda_1 < \ldots < \lambda_{n(S)}$ the \emph{distinct} eigenvalues of any representative, and by $m_1, \ldots, m_{n(S)}$ their respective multiplicities. The tuple $(m_1, \ldots, m_{n(S)})$ is called the ordered multiplicities of $S$. Then $S,S'\in\mathcal{S}_n/\mathcal{O}_n$ belong to the same stratum if and only if their ordered multiplicities coincide, i.e., $n(S)=n(S')$ and $m_i(S)=m_i(S')$ for any $1\leq i\leq n(S)=n(S')$. As a side remark, recall that it is known since Von Neumann and Wigner \cite{vonneumann} that in $\mathcal{S}_n$, the set of matrices with two coincident eigenvalues has codimension $2$.

We denote by $\mathcal{W}$ the set of vector fields on $\mathcal{S}_n/\mathcal{O}_n\simeq \mathcal{M}_n(\R)$ induced by $\mathcal{O}_n$-equivariant $C^\infty$ vector fields on $\mathcal{S}_n$. Along each integral curve in $\mathcal{S}_n/\mathcal{O}_n$ of the family $\mathcal{W}$, ordered multiplicities are preserved. Our result may be phrased as follows: for generic $k$-uples ($k\geq 2$) of elements of $\mathcal{W}$, it is possible to transfer by appropriate composition of the flows of these vector fields any empirical measure of eigenvalues to any other empirical measure of eigenvalues with the same ordered multiplicities.
\begin{corollary}[Generic control of the spectrum of symmetric matrices]\label{c:quantum}
For any integer $\num\geq 2$ there exists a residual set of $k$-uples $(X_1,\ldots,X_\num)\in\mathcal{W}^\num$, for which the following property holds. For any $S,S'\in\mathcal{S}_n/\mathcal{O}_n\simeq \mathcal{M}_n(\R)$ whose ordered multiplicities coincide, there exist $m\in\N$, $t_1,\ldots,t_m\in \R$ and $i_1,\ldots,i_m\in[k]$ (not necessarily distinct) such that $S'=e^{t_1X_{i_1}}\circ\ldots\circ e^{t_mX_{i_m}}S$.
\end{corollary}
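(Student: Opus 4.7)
The plan is to apply Theorem \ref{t:genequiv} directly, with $M=\mathcal{S}_n$ viewed as a real analytic manifold (in fact, a finite-dimensional vector space) and $G=\mathcal{O}_n$ acting by conjugation $(P,S)\mapsto PSP^\top$. The hypotheses of the theorem are immediate: $\mathcal{O}_n$ is a compact real Lie group, and the action is polynomial, hence analytic. It then only remains to translate the conclusion of Theorem \ref{t:genequiv} into the statement of the corollary, which amounts to describing the orbit space and its stratification in this setting.

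For the orbit space, the spectral theorem asserts that every symmetric matrix is orthogonally conjugate to a diagonal one, and two diagonal matrices are $\mathcal{O}_n$-conjugate iff they differ by a permutation of the diagonal entries. This yields
\begin{equation*}
\mathcal{S}_n/\mathcal{O}_n \;\simeq\; \R^n/\mathfrak{S}_n \;\simeq\; \mathcal{M}_n(\R),
\end{equation*}
the last identification sending $[(\lambda_1,\ldots,\lambda_n)]$ to $\frac{1}{n}\sum_j \delta_{\lambda_j}$, as announced in the corollary.

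For the stratification, I would compute the isotropy group $G_S$ at some $S\in\mathcal{S}_n$ having distinct eigenvalues $\lambda_1<\ldots<\lambda_{n(S)}$ of multiplicities $(m_1,\ldots,m_{n(S)})$: in an orthonormal eigenbasis, the matrices commuting with $S$ within $\mathcal{O}_n$ are exactly the block-diagonal elements of $\mathcal{O}_{m_1}\times\cdots\times\mathcal{O}_{m_{n(S)}}$. Two symmetric matrices therefore have conjugate isotropies in $\mathcal{O}_n$ iff their \emph{unordered} multiplicity tuples agree. To upgrade this to the connected-component level used in \eqref{e:stratifM}, I would observe that the orbits with a fixed \emph{ordered} multiplicity tuple $(m_1,\ldots,m_k)$ are parametrized by the open simplex $\{\lambda_1<\ldots<\lambda_k\}\subset\R^k$, which is connected, whereas moving to a different ordering within the same unordered class would require two distinct eigenvalues to collide, exiting the isotropy type. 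Hence the strata of \eqref{e:stratifM} are exactly the subsets of $\mathcal{M}_n(\R)$ with a prescribed ordered multiplicity tuple.

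With these identifications, the set $\mathcal{W}$ appearing in the statement coincides, by definition, with $\Vec^G(M_G)$ of Section \ref{s:mainresults}, and controllability in strata in the sense of Definition \ref{d:continstrata} is precisely the desired conclusion for $S,S'$ with equal ordered multiplicities. Theorem \ref{t:genequiv} then delivers the residual set of $k$-uples for which this holds, completing the proof. No substantial obstacle is expected; the only step requiring genuine verification is the connectedness argument that distinguishes ordered from unordered multiplicities, and the computation of $G_S$ as an orthogonal block-diagonal subgroup.
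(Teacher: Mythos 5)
Your proposal is correct and follows essentially the same route as the paper: the paper also obtains the corollary by applying Theorem \ref{t:genequiv} to $M=\mathcal{S}_n$ with $G=\mathcal{O}_n$ acting by conjugation, identifying $\mathcal{S}_n/\mathcal{O}_n\simeq\R^n/\mathfrak{S}_n\simeq\mathcal{M}_n(\R)$ and the strata with classes of equal ordered multiplicities. Your added details (the block-diagonal isotropy computation and the connectedness argument separating ordered from unordered multiplicity tuples) merely fill in steps the paper leaves implicit.
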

This statement is a consequence of Theorem \ref{t:genequiv}, and there exists an ``ensemble version" which follows from Theorem \ref{t:Nfold}. Also, an analogous statement holds for the natural action of the unitary group on the space of $n\times n$ Hermitian matrices.

\subsection{Particle systems} \label{s:mn}
We develop now the application of our results to (mean-field) control of particle systems, making rigorous Section \ref{s:motivation}. If $n\in\N$ and $\man$ is a manifold, the set $\mathcal{M}_n\left(W\right)$ of empirical measures over $n$ points in $W$, that is, the set of measures of the form $\mu=\frac1n \sum_{j=1}^n \delta_{x_j}$, also naturally carries a $G$-manifold structure. It is isomorphic to the quotient of $W^n$ by the action of the symmetric group $\mathfrak{S}_n$ given by
\begin{equation}\label{e:actionpermutation}
\sigma:(x_1,\ldots,x_n)\mapsto (x_{\sigma(1)},\ldots,x_{\sigma(n)})
\end{equation}
for $\sigma\in\mathfrak{S}_n$.
The isotropy group $G_q$ as $q=(x_1,\ldots,x_n)$ is not reduced to the identity if and only if at least two of the $x_i$'s are equal.

In the sequel we assume that $W$ is real-analytic, connected and of dimension $\geq 2$ (the case where $\dim(W)=1$ is actually treated in Section \ref{s:quantum}). Two points $q,q' \in W^n$, $q=(x_1,\ldots,x_n)$, $q'=(x_1',\ldots,x_n')$ have conjugate isotropy groups if and only if there exists $h:W\rightarrow W$ and $\sigma\in\mathfrak{S}_n$ such that $h(x_\ell)=x'_{\sigma(\ell)}$ for any $\ell\in[n]$, in other words if and only if the numbers of pairs/triples/quadruples/... of points among $x_1,\ldots,x_n$ which are equal coincide with the same numbers computed for $x_1',\ldots,x_n'$. This condition is actually necessary and sufficient for $q$ and $q'$ to belong to the same stratum: since $W$ is connected and of dimension $\geq 2$, it is easy to construct a smooth path from $\frac1n\sum_{j=1}^n \delta_{x_j}$ to $\frac1n\sum_{j=1}^n \delta_{x_j'}$.

It is possible to give an analytic characterization of $\mathfrak{S}_n$-equivariant vector fields on $W^n$. For this we denote by $\mathcal{M}^{\bullet}_n(W)$  the set of couples $(x,\mu)$ where $\mu\in\mathcal{M}_n(W)$ and $x\in {\rm supp}(\mu)$. Then equivariant vector fields are in one-to-one correspondance with functions $f:\mathcal{M}^{\bullet}_n(W)\rightarrow T_\bullet W$ where this notation means that $f(x,\mu)\in T_xW$ for $x\in {\rm supp}(\mu)$. Let us describe this one-to-one correspondance. If $f:\mathcal{M}^{\bullet}_n(W)\rightarrow T_\bullet W$, then for $x=(x_1,\ldots,x_n)\in \man^n$, we define
$$
V(x)=(f(x_1,\mu),\ldots,f(x_n,\mu))\in T_x\man^n, \qquad {\rm where} \ \mu =\frac1n \sum_{j=1}^n \delta_{x_j}.
$$
It is immediate to verify that $V$ is an equivariant vector field on $W^n$.  Conversely, if $V=(V_1,\ldots,V_n)$ is an equivariant vector field, for $(x,\mu)\in\mathcal{M}^{\bullet}_n(\man)$ we set
\begin{equation}\label{e:Vy}
f(y,\mu)=V_1(y,x_2,\ldots,x_n)\in T_y\man
\end{equation}
where we have written $\mu=\frac1n (\delta_y+\sum_{j=2}^n\delta_{x_j})$. The right-hand side in \eqref{e:Vy} does not depend on the order in which we put $x_2,\ldots,x_n$ since $V$ is equivariant. Therefore \eqref{e:Vy} yields a well-defined $f:\mathcal{M}_n^\bullet \rightarrow T_\bullet W$.

We denote by $\mathcal{V}$ the set of functions $f:\mathcal{M}^{\bullet}_n(W)\rightarrow T_\bullet W$ such that the equivariant vector field on $W^n$ associated with $f$ is $C^\infty$ and generates a globally defined flow. The set $\mathcal{V}$ is endowed with the $C^\infty$ topology on compact sets.

According to the above characterization of equivariant vector fields on $W^n$, Theorem \ref{t:genequiv} reads as follows in this context:
\begin{corollary}[Generic control of particle systems]\label{c:particles}
For any integer $\num\geq 2$ there exists a residual set of tuples $(f_1,\ldots,f_k)\in\mathcal{V}^\num$ for which the following property holds. For any $\mu^0,\mu^1\in \mathcal{M}_n(W)$ in the same stratum, written as
\begin{equation}\label{e:mu0mu1}
\mu^0=\frac1n \sum_{r=1}^n \delta_{x_r^0} \qquad \text{and} \qquad  \mu^1=\frac1n \sum_{r=1}^n \delta_{x_r^1} 
\end{equation}
there exist $m\in\N$, $0=t_0<t_1< \ldots<t_m\in \R$, indices $i_1,\ldots,i_m\in[k]$ (not necessarily distinct), and signs $\varepsilon_1,\ldots,\varepsilon_m\in\{-1,1\}$ such that the unique solution to  the system of coupled ODEs
\begin{equation}\label{e:coupledODEs}
\forall \ell\in[n],\, \forall j\in[m], \, \forall t\in [t_{j-1},t_j[, \qquad \frac{d}{dt} x_\ell(t)=\varepsilon_jf_{i_j}(x_\ell(t),\mu(t)), \qquad \mu(t)=\frac1n \sum_{r=1}^n \delta_{x_r(t)}
\end{equation}
with initial values $(x_1(0),\ldots,x_n(0))=(x_1^0,\ldots,x_n^0)$ satisfies 
\begin{equation}\label{e:terminal}
\frac1n \sum_{r=1}^n \delta_{x_r(t_m)}=\mu^1.
\end{equation}
\end{corollary}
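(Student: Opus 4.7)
The plan is to deduce Corollary \ref{c:particles} from Theorem \ref{t:genequiv} applied to $M = W^n$ equipped with the action \eqref{e:actionpermutation} of $G = \mathfrak{S}_n$, as in Example \ref{e:particles}. Under the standing assumption that $W$ is connected and of dimension at least $2$, the strata of $M_G \simeq \mathcal{M}_n(W)$ are described just before the statement and coincide with the equivalence classes appearing in the hypothesis on $(\mu^0, \mu^1)$.

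First I would observe that the correspondence $f \leftrightarrow V$ detailed before the statement identifies $\mathcal{V}$ with the space of complete $C^\infty$ equivariant vector fields on $W^n$; composing with the canonical map onto $\Vec^G(M_G)$ produces a continuous map $\Phi : \mathcal{V} \to \Vec^G(M_G)$. Because the $C^\infty$ topology on $\Vec^G(M_G)$ is by definition the one induced from $\Vec^G(W^n)$ (see Section \ref{s:stronger}), the preimage under $\Phi^k$ of the residual set $\mathcal{R}_0 \subset (\Vec^G(M_G))^k$ furnished by Theorem \ref{t:genequiv} is residual in $\mathcal{V}^k$. This is the sought-after residual set.

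Next I would fix $(f_1, \ldots, f_k)$ in this preimage, denote by $V_1, \ldots, V_k$ the associated equivariant fields on $W^n$ and by $X_1, \ldots, X_k \in \Vec^G(M_G)$ their projections, and pick $\mu^0, \mu^1$ in the same stratum written as in \eqref{e:mu0mu1}. Theorem \ref{t:genequiv} then provides $s_1, \ldots, s_m \in \R$ and $i_1, \ldots, i_m \in [k]$ such that $\mu^1 = e^{s_1 X_{i_1}} \circ \cdots \circ e^{s_m X_{i_m}} \mu^0$. I set $\varepsilon_j = \mathrm{sign}(s_j) \in \{-1, +1\}$ and $t_j = t_{j-1} + |s_j|$ with $t_0 = 0$, so that $e^{s_j X_{i_j}} = e^{(t_j - t_{j-1})\, \varepsilon_j X_{i_j}}$. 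Using that the quotient map $\pi : W^n \to \mathcal{M}_n(W)$ intertwines the flow of any equivariant field $V$ with that of its projection on $M_G$, the trajectory obtained by flowing $(x_1^0, \ldots, x_n^0) \in W^n$ successively under $\varepsilon_j V_{i_j}$ on each interval $[t_{j-1}, t_j]$ is precisely the solution of the coupled system \eqref{e:coupledODEs}, and its image under $\pi$ at time $t_m$ is $\mu^1$, which gives \eqref{e:terminal}.

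The main (and essentially only) obstacle is the topological bookkeeping around $\Phi$: one must ensure that residuality in $(\Vec^G(M_G))^k$ transfers to $\mathcal{V}^k$ through $\Phi^k$, which rests on the description of the induced topology on $\Vec^G(M_G)$ detailed in Section \ref{s:stronger} and on the fact that complete equivariant fields form a topologically well-behaved subset (in particular a Baire space). Once this is settled, the substantive generic controllability content of the corollary is entirely carried by Theorem \ref{t:genequiv}, and the remaining sign/time-rescaling manipulation is purely formal.
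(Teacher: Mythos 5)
Your proposal is correct and follows essentially the same route as the paper, which obtains Corollary \ref{c:particles} as a direct reformulation of Theorem \ref{t:genequiv} for $M=W^n$, $G=\mathfrak{S}_n$, via the stated correspondence between kernels $f\in\mathcal{V}$ and complete equivariant vector fields on $W^n$ and the identification of the strata of $\mathcal{M}_n(W)$. Your extra bookkeeping (transfer of residuality through the quotient topology on $\Vec^G(M_G)$, and the sign/time reparametrization turning signed flow times into the increasing times $0=t_0<t_1<\ldots<t_m$ with $\varepsilon_j\in\{-1,1\}$) is exactly the translation the paper leaves implicit.
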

With the stronger Theorem \ref{t:precisedstatement}, it is even possible to impose that $x_r(t_m)=x_r^1$ for any $r\in[n]$ instead of the weaker condition \eqref{e:terminal} (but for this it is necessary that the points $x_r^0,x_r^1$ in the writing \eqref{e:mu0mu1} are numbered in a way that $x_r^0=x_{r'}^0$ if and only if $x_r^1=x_{r'}^1$, which is possible since $\mu^0$ and $\mu^1$ belong to the same stratum).

There are other natural group actions on $W^n$ for which Theorem \ref{t:genequiv} has natural corollaries, for instance the action by rotations and/or reflections when $W=\R^d$. We leave the precise statements to the reader. We also mention that the same $G$-manifold structure has been used to model the geometry of  chords of music instruments, see e.g. \cite{music}.

\subsection{Universal interpolation with generic self-attention layers}\label{s:selfattention}

The application of Theorem \ref{t:genequiv} (or Theorem \ref{t:Nfold}) to particle systems, developed in the previous section, is relevant in machine learning. One of our motivations for this paper is actually to understand the possibilities of approximation offered by  a relatively new neural network architecture introduced in \cite{vaswani}, called Transformers, which play nowadays a central role in the inner workings of large language models (the last letter in ``Chat-GPT" stands for Transformers): more precisely, we would like to study which classes of functions these neural networks architectures are able to approximate. If this class is large, it suggests that the architecture is able to handle many different types of data and problems. 

Approximation and interpolation properties of some neural networks (see \cite{cheng} for precise definitions) are known to be equivalent to controllability properties of some non-linear systems of ODEs in discrete or continuous time. In the past 5 years, tools from geometric control like Lie bracketing have therefore been used to study the controllability properties of so-called ResNets (standing for ``residual neural networks") and their continuous-time version called neural ODEs, see e.g. \cite{cuchiero}, \cite{tabuada}, \cite{agrachevsarychev}.

However, Transformers are not of the same nature as ResNets and neural ODEs. The main difference is that they incorporate self-attention layers, which may be seen from the mathematical point of view as interacting particle systems or evolutions in the set of (empirical) measures (see \cite{vuckovic}, \cite{sinkformer}, \cite{transformers1}, \cite{transformers2}). The results of the present paper give insights on the approximation/interpolation properties of (generalized) self-attention layers, if one forgets about the rest of the architecture of Tranformers which, in addition to self-attention layers, usually incorporate also normalization layers and multi-layer perceptrons.

Our definition of self-attention layers is much broader than the specific self-attention layers used in practice: we call ``generalized self-attention layer" any vector field on $(\R^d)^n$ which may be written for some $f:\mathcal{M}^{\bullet}_n(\R^d)\rightarrow T_\bullet \R^d$ as
\begin{equation}\label{e:genSAlayer}
(x_1,\ldots,x_n)\mapsto (f(x_1,\mu),\ldots,f(x_n,\mu)), \qquad \text{where }\ \mu=\frac1n \sum_{r=1}^n \delta_{x_r}.
\end{equation}
In other words, generalized self-attention layers are nothing else than the infinitesimal-time version of a  permutation-equivariant sequence-to-sequence maps. In discrete time, they would take the form of a discrete system of coupled ODEs
\begin{equation}\label{e:discrete}
x_i((m+1)\Delta t)=x_i(m\Delta t)+(\Delta t)f(x_i(m\Delta t),\mu(m\Delta t)),  \qquad  \mu(m\Delta t)=\frac1n \sum_{r=1}^n \delta_{x_r(m\Delta t)}
\end{equation}
for some fixed $\Delta t>0$, and any $m\in\N$. In the terminology of neural networks, the term $x_i(m\Delta t)$ in the right-hand side is called a skip-connection.

Our result, which is a genericity result, works for ``almost all generalized self-attention layers", but does not say anything about the interpolation properties of the specific self-attention layers used in practice, which correspond to the choice of functions $f$ of the form
\begin{equation}\label{e:inpractice}
f\Bigl(x,\frac1n \sum_{j=1}^n \delta_{x_j}\Bigr)=\sum_{h=1}^H\frac{\sum_{j=1}^n e^{\langle Q_h x,K_h x_j\rangle}V_h x_j}{\sum_{j=1}^n e^{\langle Q_hx,K_hx_j\rangle}}
\end{equation}
for some $d\times d$ matrices $Q_h,K_h,V_h$ (see below for literature on this problem).

The study of the interpolation properties of generalized self-attention layers boils down to a problem of controllability of interacting particle system of the form presented in Corollary \ref{c:particles} (or Corollary \ref{c:selfattention} below). Our goal is not to control an interacting particle system with fixed interaction kernel by acting on a subset of particles, which is a classical question; instead, in our problem, the controls are directly given by a family of interaction kernels.

One particular feature of generalized self-attention layers (or equivalently interacting particle systems) is that they are equivariant with respect to permutation of particles. Therefore, approximation/interpolation properties are considered in the class of permutation-equivariant maps. Recall that equivariant neural networks are of particular interest because they maintain their performance even when the input data undergoes certain transformations, such as rotations, translations, or scaling, and they have a reduced number of parameters compared to non-equivariant counterparts, as they exploit the inherent symmetries in the data. 

In the context of particle systems, Theorem \ref{t:Nfold} takes the form of Corollary \ref{c:selfattention} below.  Interpreted as a universal interpolation result, it says that if we take $k\geq 2$ sufficiently generic maps (=layers) of the form \eqref{e:genSAlayer}, then it is possible for any given initial set of $N$ point clouds and any given final set of $N$ (target) point clouds, to compose these $k$ maps\footnote{At the level of the discretized equation \eqref{e:discrete}, this would mean alternating between different $f$ for different values of $m$.}, possibly many times and in some appropriate order, in a way to move each of the $N$ point clouds to its target. In other words,
\begin{center}
\emph{universal interpolation is a generic property of $k$-uples of generalized self-attention layers.}
\end{center}
\begin{corollary}\label{c:selfattention}
For any integer $\num\geq 2$ there exists a residual set of tuples $(f_1,\ldots,f_k)\in\mathcal{V}^\num$ for which the following property holds. Let $N\in\N$ and $\mu_1^0,\ldots,\mu_N^0,\mu_1^1,\ldots,\mu_N^1$ be distinct elements of $\mathcal{M}_n(W)$ such that for any $h\in[N]$, $\mu_h^0$ and $\mu_h^1$ are in the same stratum. For any $h\in[N]$ we write
$$
\mu_h^0=\frac1n\sum_{r=1}^n \delta_{x_{h,r}^0} \qquad \text{and} \qquad  \mu_h^1=\frac1n \sum_{r=1}^n \delta_{x_{h,r}^1}.
$$
Then there exist $m\in\N$, $0=t_0<t_1< \ldots<t_m\in \R$, indices $i_1,\ldots,i_m\in[k]$ (not necessarily distinct) and signs $\varepsilon_1,\ldots,\varepsilon_m\in\{-1,1\}$ such that for any $h\in[N]$, the unique solution to  the system of coupled ODEs
\begin{equation}\label{e:coupledODEs22}
\forall \ell\in[n],\, \forall j\in[m], \, \forall t\in [t_{j-1},t_j[, \quad \frac{d}{dt} x_{h,\ell}(t)=\varepsilon_j f_{i_j}(x_{h,\ell}(t),\mu_h(t)), \quad \mu_h(t)=\frac1n \sum_{r=1}^n \delta_{x_{h,r}(t)}
\end{equation}
with initial condition $(x_{h,1}(0),\ldots,x_{h,n}(0))=(x_{h,1}^0,\ldots,x_{h,n}^0)$ satisfies 
$$
\frac1n\sum_{r=1}^n\delta_{x_{h,r}(t_m)} = \mu^1_h.
$$
\end{corollary}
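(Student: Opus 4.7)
The plan is to deduce this corollary as a direct application of Theorem \ref{t:Nfold} to the $G$-manifold $M = W^n$ with $G = \mathfrak{S}_n$ acting by permutation of factors, as in Example \ref{e:particles}. Under this action, the orbit space $M_G$ is identified with $\mathcal{M}_n(W)$, and the discussion preceding the corollary supplies a homeomorphism between $\mathcal{V}$ and (the relevant subset of) $\Vec^G(M_G)$, where both spaces carry the $C^\infty$ topology on compact sets. The residual set of equivariant $k$-tuples produced by Theorem \ref{t:Nfold} therefore pulls back under this homeomorphism to a residual subset of $\mathcal{V}^k$, which is the set claimed in the statement.

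Once the $k$-tuple is fixed, to invoke Theorem \ref{t:Nfold} on given data $(\mu_h^0, \mu_h^1)_{h \in [N]}$ I must verify the two hypotheses of Definition \ref{d:continstrataensemble}. Condition (i), distinctness of the sources and of the targets separately, is an immediate consequence of the strictly stronger hypothesis that all $2N$ measures are distinct. For condition (ii), I use the standing assumption $\dim(W) \geq 2$ made in Section \ref{s:mn}: each stratum of $\mathcal{M}_n(W)$ is indexed by a multiplicity pattern $(m_1,\ldots,m_\kappa)$ with $m_1 + \cdots + m_\kappa = n$, and is locally parametrized by the $\kappa \geq 1$ distinct support points in $W$ modulo the finite symmetry permuting blocks of equal multiplicity; hence its dimension equals $\kappa\dim(W) \geq \dim(W) \geq 2$, and (ii) is automatic.

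It then remains to translate the abstract conclusion of Theorem \ref{t:Nfold}, namely the existence of real scalars $s_1,\ldots,s_m \in \R$ and indices $i_1,\ldots,i_m \in [k]$ with
\begin{equation*}
\mu_h^1 \,=\, e^{s_m X_{i_m}} \circ \cdots \circ e^{s_1 X_{i_1}}\, \mu_h^0 \qquad \text{for all } h \in [N],
\end{equation*}
into the piecewise-defined ODE formulation \eqref{e:coupledODEs22}. Writing $\varepsilon_j := \mathrm{sign}(s_j) \in \{-1,+1\}$, discarding any indices with $s_j = 0$, and setting $t_0 = 0$, $t_j = t_{j-1} + |s_j|$, the flow of $\varepsilon_j X_{i_j}$ on the covering manifold $W^n$ over the interval $[t_{j-1},t_j]$ coincides with the flow of $X_{i_j}$ for time $s_j$. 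Concatenating these flows on $W^n$ produces exactly the solution to \eqref{e:coupledODEs22} with initial datum $(x_{h,1}^0,\ldots,x_{h,n}^0)$, and by construction this solution projects in $M_G$ to the displayed composition applied to $\mu_h^0$, i.e.\ to $\mu_h^1$, which yields the desired terminal condition at time $t_m$.

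The only nontrivial verification is condition (ii) on stratum dimensions, which rests entirely on the hypothesis $\dim(W) \geq 2$; everything else amounts to setting up the homeomorphism $\mathcal{V} \simeq \Vec^G(M_G)$ and performing a cosmetic rewriting of an abstract composition of equivariant flows as a concrete piecewise-autonomous coupled ODE on the covering space $W^n$.
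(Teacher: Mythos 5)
Your proposal is correct and follows essentially the same route as the paper, which presents this corollary as a direct specialization of Theorem \ref{t:Nfold} to $M=W^n$, $G=\mathfrak{S}_n$, using the identification (from Section \ref{s:mn}) of $\mathcal{V}$ with the equivariant fields inducing $\Vec^G(M_G)$ on $\mathcal{M}_n(W)$. Your explicit verification that every stratum has dimension $\kappa\dim(W)\geq 2$ under the standing assumption $\dim(W)\geq 2$, and the sign/time reparametrization turning compositions of flows into the piecewise ODE \eqref{e:coupledODEs22}, are precisely the details the paper leaves implicit.
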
  

Compared to previous results in the literature \cite{yun}, \cite{sumformer}, our result does not need to incorporate multi-layer perceptrons to achieve universal interpolation. Besides, it is not restricted to the Euclidean setting and works for data on general manifolds $W$, thus it may be considered as a statement in geometric deep learning (see \cite{bronstein}, and \cite{gerken} for geometric deep learning with equivariant neural networks). However, its main drawback is that it does not rely on self-attention layers used in practice, namely \eqref{e:inpractice}, but on ``generic" self-attention layers, of the general form \eqref{e:genSAlayer}. Also, it deals with universal interpolation instead of universal approximation.

It would be relevant to prove an analogue of Corollary \ref{c:particles} in restricted classes of particle systems. We have in mind the following type of statement: for generic $k$-uples of functions chosen in a class of evolutions $\mathcal{F}$ smaller than $\mathcal{V}_\reg$, the same conclusion as in Corollary \ref{c:particles} holds. For instance, does it hold when $\mathcal{F}$ is the family of evolutions driven by pairwise interactions? I.e., each $f_i$ in \eqref{e:coupledODEs} is of the form 
$$
f_i\Bigl(x,\frac1n \sum_{j=1}^n \delta_{x_j}\Bigr)=\sum_{j=1}^n K_i(x,x_j)
$$
for some kernel $K_i$.

Another family $\mathcal{F}$ with meaningful applications is given by the attention dynamics in Transformers without layer normalization, i.e., each $f_i$ would be of the form \eqref{e:inpractice} (see \cite{sinkformer}, \cite{transformers1}, \cite{transformers2}) for some $H\in\N$, and some $d\times d$ matrices $Q_h,K_h,V_h$.

Finally, let us mention that there are other equivariant neural networks architectures for which universal approximation theorems in the class of equivariant maps have already been proved, see for instance \cite{keriven} for Graph Neural Networks. Our results might also be applicable to this setting.

\subsection{Control of quantum systems with symmetries}\label{s:quantumspin}

Theorem \ref{t:genequiv} also applies to quantum systems controlled through Hamiltonians which display some symmetries. Let us develop one example. The papers \cite{GHZ}, \cite{alessandro} study symmetric Ising spin network where $n$ spin $\frac12$ particles (i.e., vectors in the unit sphere of $\mathbb{C}^2$) interact through a sum of an uncontrolled permutation-invariant Hamiltonian $H_0$, with some controlled permutation-invariant Hamiltonians $H_j$, $j=1,\ldots,k$:
\begin{equation}\label{e:perminha}
H(t)=H_0+\sum_{j=1}^k u_j(t)H_j
\end{equation}
where $u=(u_1,\ldots,u_k):[0,+\infty)\rightarrow\R^k$ is the control.
What we call here a permutation-invariant Hamiltonian is a Hamiltonian which is invariant under permutation of the spins, for instance
$$
H_{zz}=\sum_{1\leq k< m\leq n} \mathbf{1}\otimes \ldots \otimes \mathbf{1}\otimes \underbrace{\sigma_z}_{\text{$k^{\rm th}$ place}} \otimes  \mathbf{1}\otimes \ldots \otimes \mathbf{1}\otimes \underbrace{\sigma_z}_{\text{$m^{\rm th}$ place}} \otimes  \mathbf{1}\otimes \ldots \otimes \mathbf{1}
$$
or else
$$
H_{xyz}=\sum_{i,j,k \text{ distinct }} \mathbf{1}\otimes \ldots \otimes \mathbf{1}\otimes \underbrace{\sigma_x}_{\text{$i^{\rm th}$ place}} \otimes  \mathbf{1}\otimes \ldots \otimes \mathbf{1}\otimes \underbrace{\sigma_y}_{\text{$j^{\rm th}$ place}} \otimes  \mathbf{1}\otimes \ldots \otimes \mathbf{1}\otimes  \underbrace{\sigma_z}_{\text{$k^{\rm th}$ place}} \otimes  \mathbf{1}\otimes \ldots \otimes \mathbf{1}
$$
where
$$
\sigma_x:=\begin{pmatrix}0&1 \\ 1&0\end{pmatrix}, \qquad \sigma_y:=\begin{pmatrix}0&i \\ -i&0\end{pmatrix},\qquad \sigma_z:=\begin{pmatrix}1&0\\ 0&-1\end{pmatrix}
$$
are the Pauli matrices, and $\mathbf{1}$ is the $2\times 2$ identity matrix. We will say that two networks $q_1=(q_1^{(1)},\ldots,q_1^{(n)})$ and $q_2=(q_2^{(1)},\ldots,q_2^{(n)})$ of $n$ labelled spin $\frac12$ particles (each $q_i^{(j)}$ belongs to the unit sphere of $\mathbb{C}^2$) have same symmetries if $q_1^{(i)}=q_1^{(j)} \Leftrightarrow q_2^{(i)}=q_2^{(j)}$, for any $i,j\in[n]$. Our results (precisely, Theorem \ref{t:genliebracket22}) imply that if we are given at least two generic permutation-invariant Hamiltonians $H_1,\ldots,H_j$, $j\geq 2$, and two networks $q_1,q_2$ of $n$ spin $\frac12$ particles having same symmetries, then there exists a path from $q_1$ to $q_2$ corresponding to some choice of control $u(t)$ in \eqref{e:perminha}. In other words, subspace controllability (see \cite[Section 1]{alessandro23}) generically holds.

The focus of \cite{alessandro} is different but related: the authors give explicit examples of permutation-invariant Hamiltonians for which they are able to prove controllability and to design control laws. 

More generally, concerning the Lie bracket approach to control of quantum systems, we also mention the paper \cite{logicgate}, in which it is shown that generically, a quantum logic gate with two or more inputs is computationally universal, i.e., copies of the gate can be ``wired together" to effect any desired logic circuit, and to perform any desired unitary transformation on a set of quantum variables. This result exactly corresponds to Lobry's theorem \cite{lobry}, applied in the particular context of unitary transformations: in other words, it asserts that if one can apply some Hamiltonians (at least two) repeatedly to a few variables at a time one can in general effect any desired unitary time evolution on an arbitrarily large number of variables.

\section{Proof of Theorem \ref{t:genequiv}} \label{s:proofgenequiv}

This section is devoted to the proof of Theorem \ref{t:genequiv}. We actually prove a slightly stronger statement, given in Theorem \ref{t:precisedstatement}. In the whole paper, we work under the assumptions that $M$ is a real analytic manifold and $G$ is a compact Lie group acting analytically on $M$. 

\subsection{A stronger statement}\label{s:stronger}
This subsection is devoted to introducing notation and stating Theorem \ref{t:precisedstatement}.

Orbits, i.e., sets of the form $Gx$, are closed submanifold (see e.g. \cite[Corollary 3.1.17]{fieldbook}). We denote by $M'$ the set of points $x\in M$ at which all equivariant vector fields are tangent to the orbit $Gx$. This set $M'$ is closed and $G$-invariant. 
\begin{example}
If $G$ acts transitively on $M$, then $M'=M$. Even if $G$ is finite, the set $M'$ may be non-empty: for instance, if $M$ is the unit circle in $\R^2$ and $G$ acts by permuting coordinates, then $M'$ has cardinal $2$.
\end{example}
Although the above example shows that $M'$ is not empty in general, there holds $M'=\emptyset$ in all applications listed in Section \ref{s:applic} and in Example \ref{e:particles}. When $M'\neq \emptyset$, controllability is not necessarily possible inside connected components of $M'$, as shown in Example \ref{e:pascontM'}, therefore we need to adapt the definition of controllability, see Definition \ref{d:continleaves}.

We denote by $\Vec^G(M)$ the set of equivariant $C^\infty$ vector fields on $M$, equipped with the $C^\infty$ topology on compact sets defined as follows. For any chart $(\phi,U)$ of $M$, any compact set $K\subset U$, any $X\in \Vec^G(M)$, $k\in\N$ and $\varepsilon>0$, we consider the set $\mathcal{U}(X,\phi,K,\varepsilon,k)$ of vector fields $Y\in \Vec^G(M)$ such that
$$
\sup_{x\in \phi(K), 0\leq |\alpha|\leq k} \|D^\alpha (\phi_*X)(x)-D^\alpha (\phi_*Y)(x)\|<\varepsilon.
$$
The $C^\infty$ topology on compact sets is defined as the set of all such sets $\mathcal{U}(X,\phi,K,\varepsilon,k)$. Since no confusion is possible, the quotient topology on $\Vec^G(M_G)$ is also called $C^\infty$ topology on compact sets.

The set $\Vec^G(M)$ is an involutive distribution, therefore it is integrable according to Frobenius's theorem. We foliate $M$ with maximal connected integral manifolds of this distribution. An \emph{equivariant leaf} is any of the leaves of this foliation. As a consequence, it is possible to connect any two points in an equivariant leaf by following equivariant vector fields.

\begin{definition}[Controllability in equivariant leaves]\label{d:continleaves}
Let $X_1,\ldots,X_k\in\Vec^G(M)$. We say that controllability holds  in equivariant leaves if for any $q,q'\in M\setminus M'$ belonging to the same equivariant leaf, there exist $m\in\N$, $t_1,\ldots,t_m\in \R$ and $i_1,\ldots,i_m\in[k]$ (not necessarily distinct) such that 
$$
q'=e^{t_1X_{i_1}}\circ \ldots \circ e^{t_mX_{i_m}}q
$$
where $e^{tX}$ denotes the flow at time $t$ of the vector field $X$ on $M$.
\end{definition}
\begin{theorem}\label{t:precisedstatement}
There exists for any $k\geq 2$ a set of $k$-uples $(X_1,\ldots,X_k)\in (\Vec^G(M))^k$ which is residual in $(\Vec^G(M))^k$ and for which controllability in equivariant leaves holds.
\end{theorem}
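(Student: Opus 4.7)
The plan is to reduce Theorem \ref{t:precisedstatement} to a Lie-algebraic spanning condition and then to show that this spanning condition holds on a residual subset of $(\Vec^G(M))^k$ by a Baire argument, a local reduction based on the slice theorem, and a codimension/averaging computation. Define $\mathcal{D}_q = \{X(q) : X \in \Vec^G(M)\} \subseteq T_qM$; since the bracket of equivariant vector fields is equivariant, $\mathcal{D}$ is an involutive singular distribution whose integral manifolds are precisely the equivariant leaves. For any equivariant tuple $(X_1,\ldots,X_k)$ the Lie algebra $\mathcal{L}$ it generates in $\Vec^G(M)$ satisfies $\mathcal{L}(q) \subseteq \mathcal{D}_q$, and moreover the flows $e^{tX_i}$ preserve $\mathcal{D}$ so that every orbit of the symmetric control system is contained in an equivariant leaf. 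By Sussmann's orbit theorem the orbit $\mathcal{O}_{q_0}$ through $q_0 \in M \setminus M'$ is an immersed connected submanifold whose tangent distribution contains $\mathcal{L}$. Consequently, if $\mathcal{L}(q) = \mathcal{D}_q$ at every $q \in M \setminus M'$, the orbit coincides with $\mathcal{D}$ in tangent directions and is therefore open in its equivariant leaf; symmetry of the control system and a connectedness argument then identify the orbit with the whole leaf, yielding controllability in equivariant leaves in the sense of Definition \ref{d:continleaves}.

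The condition ``$\mathcal{L}(q) = \mathcal{D}_q$ for every $q \in M \setminus M'$'' now has to be shown to be residual. Cover $M \setminus M'$ by a countable family $(K_n)$ of compact sets, each contained in a single isotropy stratum, so that $\mathcal{D}$ has locally constant rank near $K_n$. Let $\mathcal{B}_{n,N} \subset \Vec^G(M)^k$ consist of tuples whose iterated Lie brackets of length at most $N$ span $\mathcal{D}_q$ at every $q \in K_n$. Since spanning is an open condition depending on only $N$ derivatives, $\mathcal{B}_{n,N}$ is open in the $C^\infty$ topology on compact sets. The tuples verifying the desired spanning property on $M \setminus M'$ contain $\bigcap_n \bigcup_N \mathcal{B}_{n,N}$, so by Baire's theorem it suffices to show that, for each $n$, the open set $\bigcup_N \mathcal{B}_{n,N}$ is dense in $\Vec^G(M)^k$.

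Density is established locally and then patched. Fix $q \in K_n$ with isotropy $H = G_q$. The slice theorem provides a $G$-equivariant diffeomorphism between a neighborhood of the orbit $Gq$ and the associated bundle $G \times_H V$, where $V = T_qM/T_q(Gq)$ carries a linear $H$-action. Under this identification, germs of $G$-equivariant vector fields on $M$ correspond to $H$-equivariant vector fields on $V$ (together with the automatic infinitesimal $G$-action piece), and $\mathcal{D}_q$ is canonically identified with the $H$-fixed subspace $(T_qM)^H$. Equivariant perturbations of $(X_1,\ldots,X_k)$ supported near $q$ are manufactured by averaging arbitrary compactly supported vector fields over $G$ (equivalently, averaging $H$-equivariant perturbations on $V$). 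Expanding in Taylor jets at $0 \in V$ and decomposing $V$ into $H$-isotypic components, one performs a codimension count showing that the jets of $H$-equivariant perturbations that fail, via iterated brackets of bounded order with the current tuple, to span $(T_qM)^H$ form a positive-codimension subset of the jet space; the assumptions $k \geq 2$ (so that nontrivial brackets are available) and $q \notin M'$ (so that at least one equivariant direction transverse to the orbit exists) are precisely what keeps this count nonvacuous. A finite subcover of $K_n$ by slice neighborhoods, combined with $G$-invariant cutoffs, then upgrades this pointwise density to density on $K_n$.

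The principal obstacle is the jet/codimension computation carried out under the $H$-equivariance constraint. Lobry's non-equivariant argument exploits the freedom to use arbitrary polynomial perturbations, whereas here both the vector fields and their perturbations are restricted to be $H$-equivariant, and one must verify that the resulting bracket algebra still fills $(T_qM)^H$ rather than a proper $H$-subrepresentation. Complete reducibility of representations of the compact group $H$ reduces the verification to isotypic summands, but the bracket relations couple different summands in a nontrivial way. Compounding the difficulty, perturbations engineered to achieve spanning on one stratum may disturb spanning already obtained on adjacent, smaller strata, so the density step has to be organized as an induction on the partial order of orbit types, proceeding from smaller to larger orbits. Once this equivariant Lobry-type perturbation lemma is in hand, applying Baire to $\bigcap_n \bigcup_N \mathcal{B}_{n,N}$ delivers the residual set of $k$-tuples claimed in Theorem \ref{t:precisedstatement}.
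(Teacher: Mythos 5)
Your overall architecture is the same as the paper's: reduce controllability in equivariant leaves to the generic spanning condition ${\rm Lie}_q(X_1,\ldots,X_k)=\Vec^G(M)_{|q}$ for all $q\in M\setminus M'$ (this is Theorem \ref{t:genliebracket22}), note that this condition is a countable intersection of open conditions in the $C^\infty$ topology on compact sets so that only density must be proved, and conclude inside each equivariant leaf by the orbit theorem / Chow--Rashevskii. That reduction is correct, and so is your identification of $\Vec^G(M)_{|q}$ with the $G_q$-fixed tangent vectors via the slice theorem, which is essentially Lemma \ref{l:fundamental}.

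The genuine gap is the density step, which is the technical heart of the theorem and which your proposal only sketches. You propose an equivariant jet/codimension count for $H$-equivariant perturbations on the slice, decomposed into isotypic components, and you yourself flag that the brackets couple different isotypic summands, that perturbations made on one stratum may destroy spanning on adjacent strata (forcing an induction over orbit types), and that this computation is the ``principal obstacle''---but none of it is carried out. This is precisely the equivariant (multi-)jet transversality route that the paper deliberately avoids because of its subtleties. The paper's actual proof of density runs through different, concrete ingredients: Lemma \ref{l:septransverse} (for $k\geq 2$, generically one of the $X_i$ is transverse to the $G$-orbit at every point of $M\setminus M'$; itself proved by a codimension count over finite-dimensional subspaces of the module $\mathcal{A}$, using averaging); finite generation of the module of analytic equivariant vector fields over $C^\infty_G$ (Frisch/Noetherianity), which yields a uniform number $m$ of generators on a compact set and hence a uniform bracket length $p(\mathcal{K})=n+m-1$; commutation of averaging with Lie brackets; and the flow-box perturbation of Lemma \ref{l:inoneOi}, where $Z$ is an explicit polynomial in the flow time of the transverse field with constant matrix coefficients $\alpha$ chosen outside a positive-codimension set so that ${\rm ad}_X^0(Y+Z),\ldots,{\rm ad}_X^p(Y+Z)$ span $\Vec^G$ pointwise; the global statement then follows by a locally finite covering with $G$-invariant cutoffs, small $C^p$ perturbations, and preservation of the (open) transversality conditions. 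Without an actual proof of your equivariant Lobry-type perturbation lemma---including a reason why brackets of uniformly bounded length suffice on each compact set, which in the paper comes exactly from the finite generation argument---your proposal does not close the main step.
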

We refer the reader to Example \ref{ex:equality} for an illustration of the concept of controllability in equivariant leaves.

Recall that the Lie algebra generated by a family $\mathcal{F}\subset \Vec^G(M)$ is the smallest sub-algebra of $\Vec^G(M)$ containing $\mathcal{F}$, namely 
$$
{\rm Lie}(\mathcal{F}) := {\rm Span}\left\{[Y_1,\ldots, [Y_{j-1}, Y_j]], Y_i \in \mathcal{F}, j\in\N\right\}.
$$
Theorem \ref{t:precisedstatement} is a corollary of the following result (see Section \ref{s:conclupro} for a proof of this implication).
\begin{theorem}\label{t:genliebracket22} 
For any $k\geq 2$, there exists a set of $k$-uples $(X_1,\ldots,X_k)\in (\Vec^G(M))^k$ which is residual in $(\Vec^G(M))^k$ and for which
\begin{equation}\label{e:bracketgenerating22}
\forall q\in M\setminus M', \quad {\rm Lie}_q(X_1,\ldots,X_k)=\Vec^G(M)_{|q}.
\end{equation}
\end{theorem}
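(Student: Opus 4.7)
The plan is to recast the target set as a $G_\delta$ in $(\Vec^G(M))^k$ and to establish density of its defining open pieces via explicit equivariant local perturbations. First I would fix a countable exhaustion $(K_n)_{n\in\N}$ of the open set $M\setminus M'$ by compact subsets, and for each $n,N\in\N$ set
\[
\mathcal{R}_{n,N}=\{(X_1,\ldots,X_k)\in (\Vec^G(M))^k : \text{brackets of order}\leq N\text{ span }\Vec^G(M)|_q\ \forall q\in K_n\}.
\]
Iterated brackets depend continuously on the tuple in the $C^\infty$ topology and the spanning condition is open on a compact set, so each $\mathcal{R}_{n,N}$ is open; hence $\mathcal{R}_n:=\bigcup_N\mathcal{R}_{n,N}$ is open, and a compactness argument (upgrading pointwise existence of $N$ to a uniform bound on $K_n$ via continuity of the brackets) shows that the set in Theorem \ref{t:genliebracket22} is exactly $\bigcap_n\mathcal{R}_n$, a $G_\delta$. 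By the Baire property of the Fr\'echet space $(\Vec^G(M))^k$, residuality reduces to showing that each $\mathcal{R}_n$ is dense.

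For density I would fix $(X_1,\ldots,X_k)$, a compact $K\subset M\setminus M'$, and $\e>0$, and work locally. Using a $G$-invariant partition of unity, it suffices to produce for each $q_0\in K$ an arbitrarily small equivariant perturbation of, say, $X_2$ (keeping $X_1,X_3,\ldots,X_k$ fixed) for which the bracket condition holds on an open neighborhood of $q_0$, then patch finitely many. Two tools drive the local construction. First, an \emph{averaging} procedure: if $Y$ is a smooth vector field supported in a sufficiently small slice-type neighborhood, then $\bar Y(q)=\int_G(P_g)_*Y(q)\,dg$ is smooth, equivariant, and $C^\infty$-small whenever $Y$ is. Second, the \emph{slice theorem}: a neighborhood of the orbit $Gq_0$ is equivariantly diffeomorphic to $G\times_{G_{q_0}}V$ for some linear $G_{q_0}$-representation $V$, and equivariant vector fields near $Gq_0$ correspond bijectively to $G_{q_0}$-equivariant vector fields on $V$. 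By Schwarz's theorem, the latter form a finitely generated module over the invariant polynomial ring $\R[V]^{G_{q_0}}$. One then designs the perturbation by choosing a $G_{q_0}$-equivariant polynomial Taylor expansion on $V$, which translates into a prescribed jet of the perturbation of $X_2$ at $q_0$.

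The main obstacle is the codimension/surjectivity bookkeeping: unlike in the unconstrained case of \cite{lobry}, perturbations must lie in the constrained space $\Vec^G(M)$, and the finite-dimensional target $\Vec^G(M)|_{q_0}$ itself depends on the isotropy stratum of $q_0$. The assumption $q_0\notin M'$ is precisely what guarantees that the evaluation $\Vec^G(M)\to T_{q_0}M/T_{q_0}(Gq_0)$ is nontrivial, so that at least one transverse-to-orbit direction of perturbation is available; iterated bracketing with $X_1$ (and possibly $X_3,\ldots,X_k$) then propagates this direction throughout $\Vec^G(M)|_{q_0}$. Concretely, the heart of the proof is to establish surjectivity of a finite-order jet evaluation from the space of $G_{q_0}$-equivariant polynomial perturbations on $V$ onto $\Vec^G(M)|_{q_0}$ modulo the span already provided by the original tuple, and then to combine this with the algebraic expansion of iterated Lie brackets in terms of jets to realize the desired bracket vectors. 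Once density of each $\mathcal{R}_n$ is established, Baire produces the residual set claimed in Theorem \ref{t:genliebracket22}.
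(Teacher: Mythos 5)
Your global skeleton (Baire reduction to density, averaging, the slice theorem, finite generation of the module of equivariant fields) is in the same spirit as the paper's, but the step you yourself call the heart of the proof is missing, and the pointwise version you propose would not work. Surjectivity of a finite-order jet evaluation onto $\Vec^G(M)_{|q_0}$ only yields the bracket-spanning condition at the single point $q_0$, and this condition is \emph{not} open in $q$: every neighborhood of $q_0$ contains points of strata with strictly smaller isotropy, where the fiber $\Vec^G(M)_{|q}$ has strictly larger dimension, so spanning at $q_0$ (or along the stratum of $q_0$) says nothing about spanning at those nearby, less symmetric points. Since \eqref{e:bracketgenerating22} must hold at \emph{every} point of $M\setminus M'$, the real difficulty is to build one perturbation that achieves spanning simultaneously at all points of a whole neighborhood, uniformly across the strata it meets. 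The paper does exactly this in Lemma \ref{l:inoneOi}: it fixes finitely many analytic generators $f_1,\ldots,f_m$ of the module $\mathcal{A}_{\mathcal{K}}$ valid on a whole compact set (via Frisch's Noetherianity result, playing the role of your Schwarz-type finite generation), transports them by the flow of a field $X$ transverse to the orbits (this transversality of one of the $X_i$ themselves is arranged beforehand by Lemma \ref{l:septransverse}, an ingredient your sketch does not actually secure, since you only use $q_0\notin M'$ to get a transverse \emph{perturbation} direction), takes $Z$ to be a degree-$p$ polynomial in the tube coordinate $t$ with constant matrix coefficients $\alpha$, computes $\mathrm{ad}_X^\gamma(Y+Z)$ in closed form, and then chooses $\alpha$ outside a positive-codimension subset of matrix space (possible once $p+1\geq n+m$) so that the resulting coefficient matrix has full rank $m$ at \emph{every} point of the tube. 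Your phrase ``combine this with the algebraic expansion of iterated Lie brackets in terms of jets'' states the goal rather than giving an argument, and it contains no mechanism producing this uniform, all-points rank condition.

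A secondary, more routine omission: when you patch the finitely (or locally finitely) many local perturbations, you must preserve both the transversality used to build later tubes and the spanning already achieved on earlier ones; the paper handles this with a locally finite cover, the openness of these conditions, a cutoff supported near each tube, and the geometric smallness $\|Z_j\|_{C^p}\leq \e 2^{-j}$ guaranteeing convergence of the total modification. Some bookkeeping of this kind would have to be added to your partition-of-unity patching as well.
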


We end this section with an example showing that controllability does not necessarily hold inside connected components of $M'$ even if they form equivariant leaves, which explains why the set $M'$ is excluded in Definition \ref{d:continleaves}.
\begin{example}\label{e:pascontM'}
We consider $M=\R^d/\Z^d$ for some $d>2$, and $G$ the set of all translations in $M$. The action is transitive. Then equivariant vector fields are constant vector fields, automatically tangent to the unique orbit, therefore $M'=M$. However, for any $k\leq d-1$ equivariant vector fields $X_1,\ldots,X_k$, controllability cannot hold inside $M'$. Indeed, since $X_1,\ldots,X_k$ are constant, the integral curves of the distribution spanned by $X_1,\ldots,X_k$ are strict subsets of $M$.
\end{example}
\begin{remark}
We will see in Corollary \ref{c:M'/G} that $M'/G$ is a set of isolated points in $M_G$, and that each point in $M'/G$ is a stratum in \eqref{e:stratifM}. Therefore controllability in the sense of Definition \ref{d:continstrata} is automatically satisfied in $M'/G$, although all elements of $\Vec^G(M_G)$ vanish in $M'/G$ by definition of $M'$. This is why the set $M'/G$ does not need to be excluded in the statement of Theorem \ref{t:genequiv}. On the other side, Example \ref{e:pascontM'} shows that controllability does not hold inside $M'$, which illustrates why $M'$ needs to be removed in the statement of Theorem \ref{t:precisedstatement} (actually it is removed in Definition \ref{d:continleaves}).
\end{remark}

\subsection{Averaging}\label{s:prel}
We gather in this section a definition and a proposition that are required for the rest of the proof.

First we define the averaging operator, which turns any vector field on $M$ to an equivariant one by averaging over $G$-orbits. Recall that equivariant vector fields are vector fields on $M$, and that they induce vector fields on $M_G$. 
\begin{definition}[Averaging]
Given $X\in \Vec(M)$, we set 
\begin{equation}\label{e:Xeq}
X^{eq}=\int_G (P_g)_*X \; d\mu_G(g ) \in \Vec^G(M)
\end{equation}
where $\mu_G$ is the normalized Haar measure on $G$.
\end{definition}
Averaging commutes with Lie brackets, as shown in the following proposition in which $X$ is required to be equivariant.
\begin{proposition}[Averaging and bracketing commute]
If $X\in \Vec^G(M)$, then for any $Y\in\Vec(M)$,
\begin{equation}\label{e:commuteeq}
[X,Y^{eq}]=[X,Y]^{eq}
\end{equation}
\end{proposition}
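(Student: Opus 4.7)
The plan is to prove \eqref{e:commuteeq} in three steps: (i) push the Lie bracket $[X,\cdot]$ inside the integral defining $Y^{eq}$; (ii) use naturality of the bracket under the diffeomorphisms $P_g$; (iii) use the equivariance hypothesis $(P_g)_* X = X$ to collapse the integrand to $(P_g)_*[X,Y]$.

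For step (i), for fixed $X\in\Vec^G(M)$, the map $Y\mapsto [X,Y]$ is a continuous linear operator on $\Vec(M)$ endowed with the $C^\infty$ topology on compact sets. On the other side, the map $g \mapsto (P_g)_* Y$ is a continuous $\Vec(M)$-valued function on the compact group $G$, since the action is analytic and $Y$ is smooth. One may therefore commute $[X,\cdot]$ with the integral $\int_G (P_g)_* Y \, d\mu_G(g)$. In practice, this is easiest to verify by working in a local chart, where the bracket is a first-order differential expression in the components of the two vector fields, and where commuting integration with differentiation is justified by the smoothness in $g$ combined with compactness of $G$ (dominated convergence). This yields
$$
[X, Y^{eq}] = \int_G [X, (P_g)_* Y] \, d\mu_G(g).
$$

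For step (ii), I would invoke the naturality of the Lie bracket: for any diffeomorphism $\phi$ of $M$ and any $V,W\in\Vec(M)$, there holds $\phi_*[V,W] = [\phi_* V, \phi_* W]$. Applied to $\phi = P_g$ and combined with the equivariance assumption $(P_g)_* X = X$, this gives
$$
[X, (P_g)_* Y] \;=\; [(P_g)_* X, (P_g)_* Y] \;=\; (P_g)_* [X, Y].
$$
Substituting into the expression obtained at the end of step (i) produces
$$
[X, Y^{eq}] \;=\; \int_G (P_g)_* [X, Y] \, d\mu_G(g) \;=\; [X,Y]^{eq},
$$
which is exactly \eqref{e:commuteeq}.

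The only non-formal point is the interchange of bracket and integral in step (i); steps (ii) and (iii) are purely algebraic, and the proof is essentially a two-line calculation once that interchange is granted. The naturality identity and the equivariance of $X$ are precisely what make this commutation work: averaging with respect to a $G$-action over which $X$ is invariant is compatible with the adjoint action $[X,\cdot]$.
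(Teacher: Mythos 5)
Your proposal is correct and follows essentially the same route as the paper: the paper's proof consists of precisely the chain $[X,Y^{eq}]=\int_G[X,(P_g)_*Y]\,d\mu_G(g)=\int_G[(P_g)_*X,(P_g)_*Y]\,d\mu_G(g)=[X,Y]^{eq}$, using equivariance of $X$ and naturality of the bracket under pushforward, with the interchange of bracket and integral taken for granted. Your additional care in justifying that interchange (continuity/linearity of $[X,\cdot]$ and smooth dependence on $g$ over the compact group) is a welcome but not substantively different refinement.
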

\begin{proof}
Since $X$ is equivariant, there holds $(P_g)_*X=X$ for any $g\in G$, hence
$$
[X,Y^{eq}]=\int_G[X,(P_g)_*Y] \; d\mu_G(g )=\int_G[(P_g)_*X,(P_g)_*Y] \; d\mu_G(g )=[X,Y]^{eq}
$$
where for the last equality we used $(P_g)_*[X,Y]=[(P_g)_*X,(P_g)_*Y]$.
\end{proof}

\subsection{Density of vector fields transverse to $G$-orbits}\label{s:densitytransverse}

We introduce a stratification of $M$, whose link with the stratification \eqref{e:stratifM} is clarified below. The action of $G$ on $M$ induces a natural stratification
\begin{equation}\label{e:stratifMnonquotient}
M=\bigsqcup_{i\in\I} S^i
\end{equation}
where for each $i\in\I$, $S^i$ is a connected component of the set of points of $M$ whose isotropy group is conjugate to some given subgroup $H_i$ of $G$.  The slice theorem recalled in the proof of Lemma \ref{l:fundamental} has several consequences. Firstly, according to \cite[Proposition 3.7.2]{fieldbook}, each $S^i$ is a smooth submanifold. Secondly, $\I$ is countable: this is e.g. a consequence of the proof of \cite[Proposition 3.7.4]{fieldbook}. Moreover, as a consequence of Lemma \ref{l:fundamental} below, any equivariant leaf (see Section \ref{s:stronger}) is contained in one of the strata in \eqref{e:stratifMnonquotient}. The image of any stratum in \eqref{e:stratifMnonquotient} under quotient by $G$ is a stratum in \eqref{e:stratifM}. Conversely, the preimage of any stratum in \eqref{e:stratifM} is the union of a finite number of strata in \eqref{e:stratifMnonquotient}.

Recall that $M$ is assumed real-analytic. For any invariant subset $U\subset M$, we denote by $\mathcal{X}_{\rm eq}^\omega(U)$ (resp. $\mathcal{X}_{\rm eq}^\infty(U)$) the set of real-analytic (resp. smooth) equivariant vector fields on $U$ and by $C^\infty_G(U)$ the set of smooth $G$-invariant real-valued functions on $U$. We consider the $C^\infty_G(U)$-module 
$$
\mathcal{A}_U=\left\{ \sum_{j\in\mathcal{J}} a_jY_j \mid \mathcal{J} \text{ finite}, \; \forall j\in\mathcal{J}, a_j \in C_G^\infty(U), \ Y_j\in \mathcal{X}_{\rm eq}^\omega(U)\right\}.
$$
Then we set 
$$
\mathcal{A}=\{X\in \mathcal{X}^\infty_{\rm eq} (M) \mid \forall x, \exists U_x \neq \emptyset \text{ invariant open neighborhood of $x$ s.t. } X_{|U_x}\in\mathcal{A}_{U_x}\}
$$

For $x\in M$, we denote by $A_x$ the operator $A_x:\mathcal{A}\rightarrow T_xM$,
\begin{equation}\label{e:averagingop}
A_x:X\mapsto X^{eq}(x)
\end{equation}
(see \eqref{e:Xeq}). Its image is denoted by ${\rm Im}(A_x)$. We recall that $G$-orbits, i.e., sets of the form $Gx$ for some $x\in M$, are closed submanifolds (see e.g. \cite[Corollary 3.1.17]{fieldbook}). 
\begin{lemma}\label{l:fundamental}
Let $S$ be one of the strata in \eqref{e:stratifMnonquotient} and let $x\in S$. Then
\begin{equation}\label{e:eqmodulo}
\emph{Im}(A_x)\subset T_xS \subset \emph{Im}(A_x)+T_x(Gx).
\end{equation}
\end{lemma}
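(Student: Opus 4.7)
The plan is to use the slice theorem at $x$ to reduce both inclusions to linear algebra in $T_xM$. By the slice theorem for analytic actions of a compact Lie group, there exists a $G_x$-invariant analytic submanifold $V \subset M$ through $x$, transverse to the orbit $Gx$, such that the natural map $G \times_{G_x} V \to M$ is an equivariant analytic diffeomorphism onto an invariant open neighborhood $U$ of $Gx$. Identifying $T_xV$ with $V$ as a $G_x$-representation yields a $G_x$-invariant decomposition $T_xM = T_x(Gx) \oplus V$. A short argument in the slice picture shows that $S \cap U = G \cdot V^{G_x}$, where $V^{G_x}$ denotes the $G_x$-fixed subspace: indeed, for $w \in V$ close to $0$ the isotropy is $(G_x)_w \subset G_x$, and for such a subgroup to be $G$-conjugate to $G_x$ it must equal $G_x$, since a compact Lie subgroup conjugate to itself inside itself is necessarily the full subgroup. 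Consequently $T_xS = T_x(Gx) \oplus V^{G_x}$.

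For the first inclusion, any $X \in \mathcal{A}$ is already equivariant, so $A_x(X) = X^{eq}(x) = X(x)$. Equivariance together with $gx = x$ for $g \in G_x$ yields $(dP_g)_x X(x) = X(x)$, so $X(x) \in T_xM^{G_x}$. The $G_x$-invariance of the decomposition $T_xM = T_x(Gx) \oplus V$ gives $T_xM^{G_x} = T_x(Gx)^{G_x} \oplus V^{G_x} \subset T_x(Gx) \oplus V^{G_x} = T_xS$, proving $\text{Im}(A_x) \subset T_xS$.

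For the second inclusion it suffices to show $V^{G_x} \subset \text{Im}(A_x)$. Given $v \in V^{G_x}$, I would first consider the constant vector field $\tilde Y \equiv v$ on the slice $V$; since $v$ is $G_x$-fixed, $\tilde Y$ is $G_x$-equivariant. Using the slice identification, I would extend $\tilde Y$ by the $G$-action to an analytic equivariant vector field $Y$ on the tube $U$, so that $Y \in \mathcal{X}_{{\rm eq}}^\omega(U)$. Finally, I would multiply by a $G$-invariant smooth cut-off function $\chi$ with compact support in $U$ and $\chi(x) = 1$, obtained by averaging over $G$ a non-negative compactly supported bump concentrated near $x$ and rescaling. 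The extension by zero of $\chi Y$ is a smooth equivariant vector field on $M$ which on $U$ is the product of $\chi \in C_G^\infty(U)$ and $Y \in \mathcal{X}_{{\rm eq}}^\omega(U)$, and vanishes on a neighborhood of any point outside $\supp(\chi)$; hence it belongs to $\mathcal{A}$ and evaluates to $v$ at $x$.

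The main delicate point is the local identification $S \cap U = G \cdot V^{G_x}$, which rests on the compact-Lie-group fact about self-conjugate subgroups. Everything else is a routine combination of the slice theorem, averaging over $G$, and a bump-function construction.
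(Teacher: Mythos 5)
Your proof is correct and follows essentially the same route as the paper: the slice theorem at $x$, the $G_x$-fixed subspace $V^{G_x}$ of the slice, a constant field on the slice extended $G$-equivariantly over the tube and cut off by a $G$-invariant bump so as to land in $\mathcal{A}$, together with the $G_x$-invariance of values of equivariant fields. The only organizational difference is that the paper routes through the set $E_x$ of points with the same isotropy group (proving $\mathrm{Im}(A_x)=T_xE_x$ and then sandwiching $T_xS$), whereas you compute $T_xS=T_x(Gx)\oplus V^{G_x}$ directly; note that this identity, like the paper's step $S_x\cap S\subset E_x$, rests on the same compactness fact about conjugate subgroups and is really an equality of germs at $x$ (globally on the tube only the inclusion $S\cap U\subset G\cdot V^{G_x}$ is guaranteed), which is all that is needed for the tangent-space statement.
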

 Lemma \ref{l:fundamental} is fundamental in the sequel, and it may be tested for instance on Example \ref{ex:equality} for which the inclusion of $\text{Im}(A_x)$ in $T_xS$ is strict. 
\begin{proof}[Proof of Lemma \ref{l:fundamental}]
Our proof of \eqref{e:eqmodulo} is based on the slice theorem (Theorem 3.5.2 in \cite{fieldbook}) whose statement is the following: for every $x\in M$, it is possible to choose a smooth family of pairwise-disjoint slices 
$$
\mathcal{S} = \{S_y \mid y \in Gx\}
$$ 
satisfying the following properties:
\begin{itemize}
\item For $y\in Gx$, $S_y\subset M$ is a $G_y$-invariant embedded disk of $M$ of dimension $\dim(M)-\dim(Gx)$ which is transverse to $Gx$.
\item For $y\in Gx$ and $g\in G$, $g\cdot S_y=S_{g\cdot y}$. In particular, $g\cdot S_{y}=S_y$ for $g\in G_y$.
\item For $y\in Gx$, $S_y$ is $G$-equivariantly diffeomorphic via $\exp_y$ to the representation given by the linear action of $G_y$ on $(T_yGx)^\perp$.
\item For $z\in S_y$, $G_z$ is a subgroup of $G_y$ (see Lemma 3.7.1(a) in \cite{fieldbook}).
\item $GS_x = \bigcup_{y\in Gx} S_y$ is an open $G$-invariant neighbourhood of $Gx$ which is $G$-equivariantly diffeomorphic to the twisted product $G \times_{G_x} S_x$.
\end{itemize}
These slices are actually defined as follows. We first define a $G$-invariant metric $\nu$ on $M$ by taking any Riemannian metric on $M$, then pushing it forward by the $G$-action and finally averaging the result with respect to the Haar measure. For $y\in Gx$ we denote by  $N_y(\varepsilon)$ the set of $v\in(T_yGx)^\perp$ such that $\nu_y(v)<\varepsilon$. Then the slices $S_y$ for $y\in Gx$ are given by $S_y=\exp_y(N_y(\varepsilon))$ for some $\varepsilon$ sufficiently small and depending only on $x$. The key point is that the exponential map $\exp:TM\rightarrow M$ is $G$-equivariant, giving the linear action of $G_y$ on $(T_yGx)^\perp$. It is easily checked that if $z=\exp_y(v)$ for some $v\in N_y(\varepsilon)$, then 
\begin{equation}\label{e:subgroup}
G_z=\{g\in G_y \mid g_*(v)=v\}.
\end{equation}
 Therefore, the slice theorem  follows from this construction.

We first prove that
\begin{equation}\label{e:equivariantconstantisotropy}
\text{Im}(A_x)=T_xE_x
\end{equation}
where $E_x$ is the set of points with same isotropy group as $x$. Notice that $E_x$ is locally diffeomorphic near $x$ to the subspace of $(T_xGx)^\perp$ given by vectors which are invariant under the linear action of $G_x$ (by \eqref{e:subgroup}), hence it is a submanifold. Also, this observation shows that $\text{Im}(A_x)\subset T_xE_x$ since equivariant vector fields evaluated at $x$ are invariant under $G_x$.

We then prove $T_xE_x\subset \text{Im}(A_x)$. Let $v\in T_xE_x$, then $h_*v=v$ for any $h\in G_x$. We define a vector field on $S_x$ as follows, using the third point of the slice theorem: we consider the preimage $u$ of $v$ through the diffeomorphism $\exp_x$, which is a vector at the origin in $(T_xGx)^\perp$  which verifies $g_*(u)=u$ for any $g\in G_x$, we extend $u$ to a constant vector field on $(T_xGx)^\perp$, and then we push it forward to $S_x$ through $\exp_x$. This vector field is well-defined and equivariant because if $g_2z=g_1z$ for some $z\in S_x$, then $g_2=g_1h$ for some $h\in G_x$ according to the second point of the slice theorem. We extend this vector field to $GS_x$ by pushforward by $G$, obtaining an equivariant and analytic (because the action is analytic) vector field. We then use a smooth $G$-invariant cut-off\footnote{this is the main reason why we introduced the module $\mathcal{A}$: to be able to make cut-offs of analytic fields.} equal to $1$ on $GS_x$ to obtain an element $X\in\mathcal{A}$, equal to $v$ on $GS_x$. The involution $g\mapsto g^{-1}$ preserves the normalized Haar measure $\mu_G$ on $G$ because compact Lie groups are unimodular (i.e. the left-invariant measure is also right-invariant).
Hence
\begin{equation*}
X^{eq}(x)=\int_G((P_g)_*X)(x)\,d\mu_G(g)=\int_G((P_{g^{-1}})_*X)(x)\,d\mu_G(g)=   \int_G(P_{g^{-1}})_*(P_{g})_*v\,d\mu_G(g)=v
\end{equation*}
therefore $v\in\text{Im}(A_x)$ which concludes the proof of \eqref{e:equivariantconstantisotropy}.   

To prove \eqref{e:eqmodulo} it is now sufficient to prove $T_xE_x\subset T_xS \subset T_xE_x+T_x(Gx)$. The inclusion $T_xE_x\subset T_xS$ follows from the definition of strata. Let us prove that $T_xS\subset T_xE_x+T_x(Gx)$. For this, we use the fourth point of the slice theorem. Since all elements of $S$ have isotropy groups conjugated to $G_x$, we deduce $S_x\cap S\subset E_x$. Hence 
$$
T_xS\subset T_x(S_x\cap S)+T_x(Gx)\subset T_x E_x+T_x(Gx),
$$
which concludes the proof of \eqref{e:eqmodulo}.
\end{proof}
The following corollary may be deduced from the above proof.
\begin{corollary}\label{c:M'/G}
$M'/G$ is a set of isolated points, and each point of $M'/G$ is a stratum in \eqref{e:stratifM}.
\end{corollary}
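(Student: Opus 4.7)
The plan is to establish two facts: (i) each orbit $Gx$ with $x\in M'$ coincides with the stratum of \eqref{e:stratifMnonquotient} containing it, so its image in $M_G$ is a single point that is a stratum of \eqref{e:stratifM}; and (ii) $M'/G$ is a set of isolated points in $M_G$. Both follow from Lemma~\ref{l:fundamental} together with the slice theorem used in its proof.

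For (i), fix $x\in M'$ and let $S$ be the stratum of \eqref{e:stratifMnonquotient} containing $x$. By definition of $M'$, every equivariant vector field at $x$ is tangent to $Gx$; since $\mathcal{A}\subset\Vec^G(M)$, this gives $\text{Im}(A_x)\subset T_x(Gx)$. Substituting into the inclusion $T_xS\subset\text{Im}(A_x)+T_x(Gx)$ from Lemma~\ref{l:fundamental}, we obtain $T_xS=T_x(Gx)$ (the reverse inclusion being trivial since $Gx\subset S$). By $G$-equivariance of both $S$ and $Gx$, this equality of tangent spaces extends to every point of $Gx$, so $Gx$ is an open submanifold of $S$; but $Gx$ is also closed in $M$ and $S$ is connected, forcing $Gx=S$. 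Its image in $M_G$ is therefore a single point, giving (i).

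For (ii), combining (i) with the identity $\text{Im}(A_x)=T_xE_x$ from \eqref{e:equivariantconstantisotropy} gives $T_xE_x\subset T_x(Gx)$; since $T_xE_x\subset V:=(T_xGx)^\perp$ and $V\cap T_x(Gx)=0$, we conclude $T_xE_x=0$, i.e.\ the linear $G_x$-action on $V$ has no nonzero fixed vector. I then construct an equivariant witness as follows: start from the radial vector field $X_0(y)=y$ on $V$, which is $G_x$-equivariant because $G_x$ acts linearly on $V$; extend $X_0$ to the tube $G\cdot S_x\cong G\times_{G_x}V$ by $G$-equivariance, then multiply by a $G$-invariant bump function to produce $X\in\Vec^G(M)$ with $X(y)=y$ for $y\in V$ near $0$. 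A local computation in the slice-theorem chart shows that $X(y)\in T_y(Gy)$ if and only if there exists $\eta$ in the Lie algebra of $G_x$ satisfying $\eta\cdot y=y$. A $G_x$-invariant inner product on $V$, available by compactness of $G_x$, forces each such $\eta$ to act on $V$ as a skew-symmetric operator whose only real eigenvalue is $0$, so $\eta\cdot y=y$ is impossible for $y\neq 0$. Hence $y\notin M'$, and by $G$-invariance no orbit close to $Gx$ in $M_G$ other than $Gx$ itself belongs to $M'/G$, which proves (ii). The main obstacle is the local characterisation ``$X(y)\in T_y(Gy)$ iff $\eta\cdot y=y$ for some $\eta$ in the Lie algebra of $G_x$'', which requires unfolding the tangent space to the $G$-orbit in the quotient $G\times_{G_x}V$; once this is in place the skew-symmetric-eigenvalue argument is immediate.
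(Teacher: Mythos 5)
Your argument is correct, but it takes a genuinely different route from the paper's on both halves. For the claim that each point of $M'/G$ is a stratum, the paper stays inside the slice: from $T_xS=T_x(Gx)$ it deduces (using that the $G_x$-fixed subspace of $(T_xGx)^\perp$ is trivial, which is exactly your observation $T_xE_x=0$) that every $z\in S_x\setminus\{x\}$ has $G_z\subsetneq G_x$, so nearby orbits have a different isotropy type and $\{Gx\}$ is open in its orbit-type set of $M_G$; you instead propagate the tangent-space equality along the orbit, conclude by an open--closed--connectedness argument that the stratum of $M$ through $x$ is absorbed by $Gx$, and then invoke the stated fact that images of strata of \eqref{e:stratifMnonquotient} are strata of \eqref{e:stratifM}. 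For isolatedness, the paper simply cites local finiteness of the stratification, whereas you build an explicit equivariant witness (the radial field on the slice, extended equivariantly and cut off) and use skew-symmetry of the isotropy algebra action to show it is transverse to every nearby orbit; this is more work but is constructive, avoids local finiteness, and actually proves the stronger statement that a punctured tube around \emph{any} orbit is disjoint from $M'$ (note that your intermediate fact $T_xE_x=0$ is never used in this half).

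One slip to repair: when $G$ is disconnected (e.g.\ $G=\mathfrak{S}_n$, the paper's main example), the inclusion $Gx\subset S$ you call trivial can fail, because different points of the orbit may lie in different connected components of the isotropy-type set, and this can indeed happen with $x\in M'$ (take $M$ the unit circle with $G=\Z/2\Z\times\Z/2\Z$ generated by the two reflections $z\mapsto\bar z$ and $z\mapsto-\bar z$: then $x=1\in M'$, $Gx=\{\pm1\}$, but the stratum of $x$ is $\{1\}$). The repair is immediate: at $y=gx\in S$ one has $gS=S$, so the tangent-space equality still reads $T_yS=T_y(Gx)$; running your open--closed argument with $Gx\cap S$ gives $S\subset Gx$ rather than $S=Gx$, and the image of $S$ in $M_G$ is still the single point $\{Gx\}$, which is all your conclusion needs.
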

\begin{proof}
Let $x\in M'$, and denote by $S$ the stratum containing $x$. If the smooth submanifold $Gx$ has dimension $0$, then according to Lemma \ref{l:fundamental} there holds $T_xS=\{0\}$, therefore $Gx$ is a stratum in \eqref{e:stratifM}, reduced to a point. If $Gx$ has dimension $\geq 1$, since $Gx\subset S$ and $x\in M'$, Lemma \ref{l:fundamental} implies that $T_xS=T_x(Gx)$. The slice theorem shows that if $z\in S_x\setminus \{x\}$ (where $S_x$ denotes the slice at $x$, see proof of Lemma \ref{l:fundamental}), then $G_z\subsetneq G_x$. Therefore $Gx\in M'/G$ is a stratum in \eqref{e:stratifM}.

As recalled at the beginning of Section \ref{s:densitytransverse}, the number of strata is locally finite. Therefore, $M'/G$ is a set of isolated points.
\end{proof}

We say that $X\in \Vec(M)$ is transverse to the $G$-orbit at $x$ if $X(x)\notin T_x(Gx)$. Lemma \ref{l:fundamental} is useful to prove the following result.
\begin{lemma}\label{l:septransverse}
Assume $k\geq 2$.
Then there exists a residual set 
of $k$-uples $(X_1,\ldots,X_k)\in \mathcal{A}^k$ such that for any $x\in M\setminus M'$, at least one of the vectors $X_1(x),\ldots,X_k(x)$ is transverse to the $G$-orbit at $x$. 
\end{lemma}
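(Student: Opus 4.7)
The approach is to apply the Baire category theorem on $\mathcal{A}^k$. Since $M'$ is closed and $M$ is second countable, exhaust $M \setminus M'$ by countably many compact sets $K_n$, and set
$$
\mathcal{O}_n = \bigl\{(X_1,\ldots,X_k) \in \mathcal{A}^k : \forall x \in K_n, \; \exists i \in [k], \; X_i(x) \notin T_x(Gx)\bigr\}.
$$
The desired residual set will be $\bigcap_n \mathcal{O}_n$, provided each $\mathcal{O}_n$ is open and dense. For openness, the incidence set $\{((X_1,\ldots,X_k), x) \in \mathcal{A}^k \times K_n : X_i(x) \in T_x(Gx) \text{ for all } i\}$ is closed: evaluation is continuous, and $T_x(Gx)$ is the image under the infinitesimal-action map of the compact unit ball of $\mathfrak{g}/\mathfrak{g}_x$, which gives the closedness of the containment $v \in T_x(Gx)$ jointly in $(x, v)$. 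Compactness of $K_n$ then yields that the projection to $\mathcal{A}^k$ is closed, and $\mathcal{O}_n$ is its complement.

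The bulk of the work is density. Fix $(X_1, \ldots, X_k) \in \mathcal{A}^k$ and $\varepsilon > 0$. By Lemma \ref{l:fundamental}, $\textrm{Im}(A_x) = T_xE_x$, and since $x \notin M'$ we have $T_xE_x \not\subset T_x(Gx)$, so the quotient $W_x := T_xE_x / (T_xE_x \cap T_x(Gx))$ has dimension $d_x \geq 1$. The explicit construction in the proof of Lemma \ref{l:fundamental} furnishes, for any prescribed $v \in T_xE_x$, an element of $\mathcal{A}$ taking the value $v$ at $x$; combining this with compactness of $K_n$ and local finiteness of the stratification, we extract finitely many $Y_1, \ldots, Y_N \in \mathcal{A}$ such that the projections $\{[Y_\beta(x)]\}_\beta$ span $W_x$ for every $x \in K_n$. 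Now consider the finite-dimensional perturbation
$$
X_i^{(t)} = X_i + \sum_{\beta=1}^N t_{i,\beta} Y_\beta \in \mathcal{A}, \qquad t = (t_{i,\beta}) \in \R^{kN},
$$
which is arbitrarily close to $X_i$ in the $C^\infty$ topology when $|t|$ is small.

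The bad parameter set $\mathcal{B} \subset \R^{kN}$ consists of those $t$ for which some $x \in K_n$ satisfies $X_i^{(t)}(x) \in T_x(Gx)$ for all $i \in [k]$. Working on a single stratum $S$ at a time (finitely many intersect $K_n$), the bad condition at a single orbit produces $k d_x$ linearly independent equations on $t$ (by the spanning property of the $Y_\beta$), so the incidence set in $\R^{kN} \times (S \cap K_n)/G$ has codimension $k d_x$. Using that $\dim (S/G) = d_x$, the projection to $\R^{kN}$ is a closed (compact) subset of dimension at most
$$
kN - k d_x + d_x \;=\; kN - (k-1) d_x \;\leq\; kN - 1,
$$
where the final inequality uses $k \geq 2$ and $d_x \geq 1$. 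Thus $\mathcal{B}$ is closed with empty interior (a compact set of dimension strictly below the ambient one), so arbitrarily small $t \notin \mathcal{B}$ exist, yielding $(X_i^{(t)})_i \in \mathcal{O}_n$.

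The principal obstacle lies in this density step, where the dimension count must handle all orbits in $K_n$ simultaneously. It hinges on two ingredients: the hypothesis $k \geq 2$, which produces the strictly positive factor $(k-1)$ in the codimension, and equivariance, which makes the bad condition depend only on the orbit and thereby replaces the effective dimension of $x$ by $\dim(S/G) = d_x$ rather than $\dim S$. Without either, the codimension would collapse and the bad set could fail to be nowhere dense.
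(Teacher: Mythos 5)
Your proposal is correct in its main thrust and follows essentially the same route as the paper's proof: exhaust $M\setminus M'$ by invariant compact sets, reduce to a finite-dimensional space of equivariant perturbations whose values span $\mathrm{Im}(A_x)$ modulo $T_x(Gx)$ at every point of the compact set (the paper's $F_j$, your $Y_1,\ldots,Y_N$), and then count: the bad condition at an orbit imposes $k\,d$ independent conditions while, by equivariance, the parameter of the union is only the orbit space of dimension $d$, giving codimension $(k-1)d\geq 1$ since $k\geq2$; openness on each compact piece plus Baire finishes the argument. You even make explicit the point the paper leaves implicit, namely that equivariance lets one replace $\dim S$ by $\dim(S/G)$ in the count.

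One inaccuracy worth flagging is your justification of openness. The set $\{(x,v)\in TM : v\in T_x(Gx)\}$ is \emph{not} closed in general, because orbit dimension can drop in the limit: for $G=SO(2)$ rotating $\R^2$, take $x_m\to 0$ and unit vectors $v_m\perp x_m$; then $v_m\in T_{x_m}(Gx_m)$ but the limit is a nonzero vector at $0$, whereas $T_0(G\cdot 0)=\{0\}$. So the "compact unit ball of $\mathfrak{g}/\mathfrak{g}_x$" argument fails precisely when $\mathfrak{g}_x$ jumps, and such jumps do occur inside $K_n\subset M\setminus M'$. The conclusion you need is nevertheless true, but its proof must use equivariance of the $X_i$: in slice coordinates at the limit point $x$, the slice component of $X_i(x)$ is $G_x$-fixed, while the slice components of orbit-tangent spaces at nearby points lie in $\mathfrak{g}_x\cdot v$, which is orthogonal to the $G_x$-fixed subspace (the $G_x$-representation is orthogonal); passing to the limit forces the slice component of $X_i(x)$ to vanish, i.e.\ $X_i(x)\in T_x(Gx)$. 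With this repaired (the paper itself only asserts openness from compactness of $M_j$), your argument is complete and matches the paper's.
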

\begin{proof}
In the proof of Lemma \ref{l:fundamental} we recalled the slice theorem. We also recall from \cite[Proposition 3.7.4]{fieldbook} that if $M$ is a compact $G$-manifold or a $G$-representation, then the number of isotropy types (i.e. different isotropy subgroups, up to conjugation) for the $G$-action is finite. It follows from the proof of  \cite[Proposition 3.7.4]{fieldbook} that even if $M$ is not assumed compact, for any compact set $K\subset M$, the number of strata of $M$ in the sense of the stratification \eqref{e:stratifMnonquotient} (in particular strata are connected sets) which have non-empty intersection with $K$ is finite. This property is called Property \textbf{P} in the sequel.

We exhaust the open set $M\setminus M'$ by increasing compact sets $M_j$, $j\in\N$ (this is possible thanks to Whitney's embedding theorem) assumed to be $G$-invariant: 
$$
M=\bigcup_{j\in\mathbb{N}} M_j, \qquad \forall j\in\mathbb{N}, \, M_j\subset M_{j+1}, \qquad GM_j=M_j.
$$
In the sequel $j\in\N$ is fixed. Let $S$ be a stratum intersecting $M_j$. The set $S/G$ is a smooth manifold, and we denote its dimension by $\ell(S)$. The dimension of $T_x(Gx)$ does not depend on $x\in S$ (because $G_x$, $G_{x'}$ are conjugate for $x,x'\in S$) and there holds $\ell(S)=\text{dim}(T_xS/T_x(Gx))$ for any $x\in S$. 

We consider for $x\in M$ the linear map $h_x:\mathcal{A}^k\rightarrow (T_xM/T_x(Gx))^k$ 
$$
h_x:(X_1,\ldots,X_k)\mapsto (A_xX_1\ {\rm mod}(T_x(Gx)),\ldots,A_xX_k\ {\rm mod}(T_x(Gx))).
$$
According to Lemma \ref{l:fundamental}, for $x\in S$, the application $h_x$ has rank $k\ell(S)$. The domain of $h_x$ is infinite-dimensional, but to compute codimensions, we restrict $h_x$ to a finite-dimensional space, while preserving its range: for any $x\in M_j$, we choose a finite dimensional subspace of $\mathcal{A}^k$ such that for any $y$ in some open neighborhood of $x$ the restriction of $h_y$ to this subspace has same range as $h_y$. Covering the compact set $M_j$ with a finite number of such open neighborhoods, we end-up with a finite dimensional subspace $F_j\subset \mathcal{A}$ such that the restriction $\tilde{h}_x:=h_{x|F_j^k}$ has also rank $k\ell(S)$ for any $x\in M_j\cap S$. Its kernel $\tilde{h}_x^{-1}(0)\subset F_j^k$ has codimension $k\ell(S)$ for $x\in M_j\cap S$. The union $\bigcup_{x\in M_j\cap S} \tilde{h}_x^{-1}(0)$ is a subset of $F_j^k$ of codimension $\geq(k-1)\ell(S)$. This quantity is $\geq 1$ as soon as $\ell(S)\geq 1$ (since $k\geq 2$).

The above reasoning implies that for any $j\in\N$, for any stratum $S$ having non-empty intersection with $M_j$, $\bigcup_{x\in M_j\cap S} h_x^{-1}(0)$ is a subset of codimension $\geq (k-1)\ell(S)$. Taking the union over the locally finite number (according to Property \textbf{P} above) of strata for which $\ell(S\cap M_j)\neq 0$, we obtain that the codimension of $\bigcup_{x\in M_j} h_x^{-1}(0)$ is $\geq 1$. Therefore there exists a dense set of $k$-uples $(X_1,\ldots,X_k)\in\mathcal{A}^k$ on $M$ such that for any $x\in M_j$ there exists $i\in\{1,\ldots,k\}$ having the property that $X_i(x)$ is transverse to the $G$-orbit at $x$. This set of $k$-uples is open since $M_j$ is compact. Taking the intersection of these sets over $j\in\mathbb{N}$, we obtain Lemma \ref{l:septransverse}.
\end{proof}
\begin{remark}
Assume $G$ is finite. Since any orbit is discrete, a vector field $X$ is transverse to the $G$-orbit at $x$ if and only if $X(x)\neq 0$. Therefore, Lemma \ref{l:septransverse} means that for an open dense set of $k$-uples $(X_1,\ldots,X_k)$ of elements of $\mathcal{A}^k$, for any $x\in M\setminus M'$ at least one of the vectors $X_1(x),\ldots,X_k(x)$ is $\neq 0$. 
\end{remark}

\subsection{Proof of Theorem \ref{t:genliebracket22}}
Recall the notation 
$$
{\rm ad}_X^0Y=Y, \qquad {\rm ad}_X^kY=[X,{\rm ad}_X^{k-1}Y], 
$$ 
for any $k\geq 1$ and any vector fields $X,Y$.
Our proof of Theorem \ref{t:genliebracket22} is based on the following lemma.
\begin{lemma}\label{l:inoneOi}
Let $\mathcal{K}\subset M$ be a $G$-invariant compact set. Let $X,Y\in\mathcal{A}$ and let $\mathcal{O}\subset \mathcal{K}$ be a tubular open set of the form
\begin{equation} \label{e:U}
\mathcal{O}=\bigsqcup_{t\in ]-T,T[} e^{tX}\Sigma
\end{equation}
where $\Sigma\subset M$ is a $G$-invariant hypersurface, $X$ is transverse to $\Sigma$, and $T>0$ is small enough so that \eqref{e:U} defines tubular coordinates in $\mathcal{O}$. There exists $p(\mathcal{K})\in\N$ (depending only on $\mathcal{K}$) such that for any $p\geq p(\mathcal{K})$, any $\varepsilon>0$ and any neighborhood of the closure $\overline{\mathcal{O}}$, there exists $Z\in\mathcal{A}$ supported in this neighborhood and with $\|Z\|_{C^p}\leq \varepsilon$ such that 
$$
\forall q\in\mathcal{O}, \quad \emph{Span}\left({\rm ad}_X^0(Y+Z)(q),\ldots,{\rm ad}^p_X(Y+Z)(q)\right)= \Vec^G(\mathcal{O})_{|q}.
$$
\end{lemma}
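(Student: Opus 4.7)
In tubular coordinates provided by \eqref{e:U}, identify $\mathcal{O}$ with $\Sigma\times(-T,T)$ via $(s,t)\mapsto e^{tX}(s)$. Under this identification $X=\partial_t$ and, since the flow of $X$ commutes with $G$ by equivariance, the $G$-action becomes diagonal: $g\cdot(s,t)=(g\cdot s,t)$. In particular, for any equivariant vector field $F$ on $\Sigma$, its $t$-independent extension $\tilde F(s,t):=F(s)$ is equivariant on $\mathcal{O}$ and satisfies $[X,\tilde F]=0$. Consequently, given a vector field of the form $W=\sum_i w_i(s,t)\tilde F_i$ with $w_i\in C^\infty_G$, one computes $\operatorname{ad}_X^k W=\sum_i(\partial_t^k w_i)\tilde F_i$; evaluating at a point $q=(s_0,t_0)$ reduces the span of $\{\operatorname{ad}_X^k W(q)\}_{k=0,\ldots,p}$ to a matter of $t$-Taylor coefficients of the $w_i$'s.

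Using the slice theorem and the local finiteness of isotropy types inside the compact set $\overline{\mathcal{O}}\subset\mathcal{K}$ (Property~\textbf{P} from the proof of Lemma~\ref{l:septransverse}), I will produce finitely many analytic equivariant vector fields $\tilde F_0=X,\tilde F_1,\ldots,\tilde F_N$, with $\tilde F_i$ for $i\geq 1$ being $t$-independent extensions of analytic equivariant vector fields on $\Sigma$, such that $\{\tilde F_i(q)\}_{i=0}^N$ spans $\Vec^G(\mathcal{O})|_q$ at every $q\in\overline{\mathcal{O}}$. I set $p(\mathcal{K}):=N$ and expand $Y=\sum_i a_i\tilde F_i$ locally with $a_i\in C^\infty_G$. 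Then the spanning condition for $Y+Z$ at $q$ becomes exactly the rank-$\dim\Vec^G(\mathcal{O})|_q$ condition on the $(p+1)\times(N+1)$ matrix $\bigl(\partial_t^k(a_i+z_i)(q)\bigr)_{k,i}$, read through the evaluation map $e_i\mapsto\tilde F_i(q)$.

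To construct $Z$, choose a $G$-invariant smooth cutoff $\chi$ supported in the given neighborhood of $\overline{\mathcal{O}}$ and equal to $1$ on $\overline{\mathcal{O}}$, and put $Z:=\sum_{i=0}^{N}\chi(s)\phi_i(t)\tilde F_i$, with the scalar functions $\phi_i$ chosen so that the Wronskian-type matrix $(\phi_i^{(k)}(t_0))_{k,i}$ is invertible at every $t_0\in(-T,T)$ (for instance $\phi_i(t):=\delta\,e^{\lambda_i t}$ for distinct $\lambda_i$). Then $Z\in\mathcal{A}$ and $\|Z\|_{C^p}$ can be made as small as needed by shrinking $\delta$. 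The coefficient matrix at $q=(s_0,t_0)$ becomes $(\partial_t^k a_i(q))_{k,i}+\chi(s_0)(\delta\,\lambda_i^k e^{\lambda_i t_0})_{k,i}$, and any maximal minor determinant is a polynomial in $\delta$ of degree $N+1$ with nonzero leading coefficient (a Vandermonde determinant in the $\lambda_i$'s times a positive power of $\chi(s_0)$).

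The delicate step---and the main obstacle---is to pick a single small $\delta>0$ so that the matrix has maximal rank \emph{at every} $q\in\overline{\mathcal{O}}$ simultaneously. For each fixed $q$ the bad $\delta$'s form a finite set (roots of the above polynomial), but a naive union over $q$ may be dense in $(0,\varepsilon)$. To get around this, I will enlarge the perturbation family to a finite-dimensional parameter space (by varying the amplitudes, the exponents $\lambda_i$, and possibly allowing $G$-invariant multipliers depending on $s$), so that the bad locus in parameter space becomes an analytic subset of positive codimension. Combined with the finite slice cover of $\overline{\mathcal{O}}$ obtained in the second step and the analyticity of the rank condition, a Baire/Sard-type argument in the parameter space then produces an arbitrarily small parameter vector avoiding the bad locus, yielding the desired $Z\in\mathcal{A}$ with $\|Z\|_{C^p}<\varepsilon$ for which spanning holds uniformly on $\mathcal{O}$.
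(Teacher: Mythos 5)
Your overall strategy is the same as the paper's (trivialize $\mathcal{O}$ along the flow of $X$, expand $Y$ and the perturbation in finitely many analytic equivariant generators so that the spanning condition becomes a matrix rank condition, then choose generic parameters), but the step you yourself flag as "the main obstacle" is exactly the heart of the lemma, and your sketch does not close it — and with your choice $p(\mathcal{K}):=N$ it \emph{cannot} be closed. For each fixed $q$ the rank-deficiency locus in your parameter space has codimension $p+1-(N+1)+1=p-N+1$, which equals $1$ when $p=N$ (a hypersurface), no matter how many amplitudes, exponents $\lambda_i$ or $s$-dependent invariant multipliers you add to the family; taking the union over the $n$-dimensional family of points $q\in\overline{\mathcal{O}}$ then leaves no codimension at all, so no Baire/Sard argument can produce a uniformly good small parameter. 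The fix is not to enlarge the parameter space but to increase the number of iterated brackets: the paper takes $p+1\geq n+m$ (with $m$ the number of analytic generators of the module $\mathcal{A}_{\mathcal{K}}$ and $n=\dim M$), so that the per-point bad set of constant matrices $\alpha\in\mathcal{M}_{(p+1)\times m}$ has codimension $p+2-m\geq n+2$, which beats the $n$-dimensional sweep over $q$ (see the computation around \eqref{e:adXkYZ}--\eqref{e:eta}); this is also why $p(\mathcal{K})=n+m-1$, not $N$. Your exponential ansatz $\phi_i(t)=\delta e^{\lambda_i t}$ with a single scalar $\delta$ is in any case too rigid; the paper perturbs with a full constant matrix of coefficients against a polynomial basis $t^i/i!$, which is what makes the determinantal codimension count available.

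A secondary gap: you write $Y=\sum_i a_i\tilde F_i$ with $a_i\in C^\infty_G$ after only arranging that the values $\tilde F_i(q)$ span $\Vec^G(\mathcal{O})_{|q}$ pointwise. Pointwise spanning does not give such an expansion with smooth invariant coefficients where the dimension of the span drops; what is needed is that finitely many \emph{analytic} equivariant fields generate the module over $C^\infty_G$, which the paper obtains from the Noetherian property of germs of real-analytic functions (Frisch \cite{frisch}) applied to $\mathcal{A}_{\mathcal{K}}$ — this is the reason the class $\mathcal{A}$ is built from analytic fields in the first place, and it is also what makes the number $m$ (hence $p(\mathcal{K})$) depend only on $\mathcal{K}$ and not on $\mathcal{O}$. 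Your slice-theorem construction of $t$-independent extensions $\tilde F_i$ with $[X,\tilde F_i]=0$ is a pleasant simplification of the paper's $f_i^t=(e^{tX})_*f_i$ bookkeeping, but it does not substitute for the finite-generation statement.
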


We postpone the proof of Lemma \ref{l:inoneOi} to the end of this section, and first explain how to finish the proof of Theorem \ref{t:genliebracket22}. In the $C^\infty$ topology on compact sets (defined in Section \ref{s:stronger}), $k$-uples of vector fields satisfying \eqref{e:bracketgenerating22} form a countable intersection of open sets. Therefore, we only need to prove their density.

Let us fix $(X_1,\ldots,X_k)\in (\Vec^G(M))^k$. By density of $\mathcal{A}$ in $\Vec^G(M)$ together with Lemma \ref{l:septransverse}, we may assume that $(X_1,\ldots,X_k)\in\mathcal{A}^k$ satisfy the conclusion of Lemma \ref{l:septransverse}, i.e., for any $x\in M\setminus M'$, at least one of the vectors $X_1(x),\ldots,X_k(x)$ is transverse to the $G$-orbit at $x$. Thanks to Lemma \ref{l:septransverse}, we cover $M\setminus M'$ with $G$-invariant open sets $(\mathcal{O}_j)_{j\in\J}$ which are tubular neighborhoods of the form \eqref{e:U} for some hypersurface $\Sigma:=\Sigma_j$ transverse to the $G$-orbits in $\mathcal{O}_j$, some $T:=T_j\in\R^+$ and some $X:=X_{i_j}$ where $i_j\in\{1,\ldots,k\}$ for any $j\in\J$. We may assume that this covering is locally finite, i.e., for any compact subset of $M\setminus M'$, the number of elements of this covering which intersect this compact set is finite, therefore $\mathcal{J}=\N$ or $\mathcal{J}=\{1,\ldots,J\}$ for some $J\in\N$. We also fix an increasing sequence $(\mathcal{K}_\ell)_{\ell\in\N}$ of compact sets such that $M=\bigcup_{\ell\in\N} \mathcal{K}_\ell$.

We modify the vector fields $X_1,\ldots,X_k$ inductively, for $j=1,2,\ldots$ ($j\in\mathcal{J}$). At the end of step $j\in\mathcal{J}$, we have the vector fields $X_1^{(j)},\ldots,X_k^{(j)}$ and we ensure that:
\begin{enumerate}[(1)]
\item $X_1^{(j)},\ldots,X_k^{(j)}$ satisfy 
$$
\forall q\in \bigcup_{j'\leq j}\mathcal{O}_{j'}, \qquad {\rm Lie}_q(X_1,\ldots,X_k)=\Vec^G(M)_{|q}.
$$
\item For any $j'\in\mathcal{J}$, $X^{(j)}_{i_{j'}}$ is transverse to $\Sigma_{j'}$. 
\end{enumerate}

Fix $j\in\mathcal{J}$ and assume that step $j-1$ has been done. We pick $\alpha_j\in [k]\setminus \{i_j\}$ (arbitrarily). We modify $X_{\alpha_j}^{(j-1)}$ in a neighborhood of $\mathcal{O}_{j}$. The modification only affects $X_{\alpha_j}$: it is of the form 
\begin{align}
X^{(j)}_{\alpha_j}&=X^{(j-1)}_{\alpha_j}+\varphi_j Z_j \label{e:defX_j}\\
X^{(j)}_i&=X^{(j-1)}_i \text{ for } i\neq \alpha_j.\nonumber
\end{align}
In particular $X_{i_j}^{(j)}=X_{i_j}^{(j-1)}$. Let us explain the construction of $\varphi_j$ and $Z_j$.

Let us fix an index $\ell$ such that $\mathcal{O}_j\subset\mathcal{K}_\ell$. We choose $Z_j$ thanks to Lemma \ref{l:inoneOi} such that for any $q\in\mathcal{O}_{j}$ the vectors 
$$
ad_{X^{(j)}_{i_j}}^k X_{\alpha_j}^{(j)}(q)=ad_{X^{(j-1)}_{i_j}}^k(X^{(j-1)}_{\alpha_j}+Z_j)(q), \qquad k=0,\ldots,p
$$ 
span $\Vec^G(M)_{|q}$. The regularity index $p$ above only depends only on $\mathcal{K}_\ell$ but not on $\mathcal{O}_j$ according to Lemma \ref{l:inoneOi}. We also take $\varphi_j$ a $C^\infty(M)$ and $G$-invariant cutoff function supported near $\mathcal{O}_j$, with value $1$ in $\mathcal{O}_j$, and $0$ outside a small neighborhood of $\mathcal{O}_j$.   Moreover, we require the following properties:
\begin{itemize}
\item $\|Z_j\|_{C^\reg}\leq \varepsilon 2^{-j}$. 
\item (1) and (2) are satisfied.  
\end{itemize}
The second bullet is guaranteed by taking $\|Z_j\|_{C^\reg}$ sufficiently small and the support of $\varphi_j$ to be a sufficiently small neighborhood of the closure $\overline{\mathcal{O}}_j$. Here we use the fact that the covering of $M\setminus M'$ is locally finite, hence the transversality condition (2), which is an open condition, is perturbed only for a finite number of $j'\in \J$, and therefore remains true if the perturbation is sufficiently small.

Once $j$ has run over $\mathcal{J}$, and at each step a perturbation of the form \eqref{e:defX_j} has been added, we obtain modified vector fields which we denote by $X_1',\ldots,X_k'\in {\rm Vec}^G(M)$. Convergence of the series of modifications is guaranteed by the first bullet above, with $\|X_i'-X_i\|_{C^{\reg_\ell}(\mathcal{K}_\ell)}\leq \varepsilon$ for any $i\in[k]$ and for some $p_\ell$ depending only on $\mathcal{K}_\ell$. Moreover, the vector fields satisfy \eqref{e:bracketgenerating22} thanks to (1). This concludes the proof of Theorem \ref{t:genliebracket22}.

We finally prove  Lemma \ref{l:inoneOi}.
\begin{proof}[Proof of Lemma \ref{l:inoneOi}]
Any module generated by real-analytic vector fields is locally finitely generated, due to the Nötherian property of the ring of germs of real-analytic functions (see \cite[Theorem I.9]{frisch}). Therefore, the module $\mathcal{A}_{\mathcal{K}}$ is locally finitely generated: there exist $m\in\N$ and analytic vector fields $f_1,\ldots,f_m$ on $\mathcal{K}$ such that 
\begin{equation}\label{e:finitelygenerated}
\mathcal{A}_{\mathcal{K}}=\left\{ \sum_{i=1}^m a_if_i \mid \forall i\in[m],\ a_i \in C_G^\infty(\mathcal{K})\right\}.
\end{equation}
Of course, the analytic vector fields $f_1,\ldots,f_m$, when restricted to $\mathcal{O}$, also generate $\mathcal{A}_O$. But it is important for us that the number $m$ of vector fields depends only on $\mathcal{K}$ (not on $\mathcal{O}\subset \mathcal{K}$).
Besides, $f_1,\ldots,f_m\in\mathcal{X}_{eq}^\omega(\mathcal{K})$ since all vector fields in $\mathcal{A}_{\mathcal{K}}$ are equivariant.

Since $\text{ad}_Xf_i\in\mathcal{A}_{\mathcal{O}}$ (according to \eqref{e:commuteeq})  and $\mathcal{A}_{\mathcal{O}}$ is finitely generated by the $f_j$,
$$
\text{ad}_Xf_i=\sum_{j=1}^m a_{ij}f_j
$$
for some $a_{ij}\in C_G^\infty(\mathcal{O})$, $1\leq i,j\leq m$. We set 
\begin{equation}\label{e:deffit}
f_i^t:= (e^{tX})_* f_i
\end{equation}
and observe that
\begin{equation}\label{e:EDO}
\frac{d}{dt}f_i^t=-\sum_{j=1}^m (a_{ij}\circ e^{-tX})f_j^t, \qquad f_i^0=f_i.
\end{equation}
Since the equation \eqref{e:EDO} is linear and the $f_j$ verify \eqref{e:finitelygenerated}, the $f_j^t$, $j=1,\ldots,m$ verify
\begin{equation}\label{e:Aatt}
\mathcal{A}_{\mathcal{O}}=\left\{ \sum_{i=1}^m a_if_i^t \mid \forall i\in[m],\ a_i \in C_G^\infty(\mathcal{O})\right\}
\end{equation}
for any $t\in ]-T,T[$. For some sufficiently large $p\in\mathbb{N}$ and some matrix $\alpha\in \mathcal{M}_{m\times (p+1)}$, both to be chosen later, we consider for $i=0,\ldots,p$
\begin{equation}\label{e:gialphafj}
g_i=\sum_{j=1}^m \alpha_{ij}f_j.
\end{equation}
Then $g_i^t:= (e^{tX})_* g_i$ verify
\begin{equation}\label{e:gitfit}
g_i^t=\sum_{j=1}^m \alpha_{ij}f_j^t.
\end{equation}
In the sequel, each point of $\mathcal{O}$ is written as $e^{tX}q_0$ where $t\in ]-T,T[$ and $q_0\in \Sigma$, thanks to \eqref{e:U}.
For such $g_i$'s, we consider
\begin{equation}\label{e:Zfunctionalpha}
Z(e^{tX}q_0)=(e^{tX})_*\left(\sum_{i=0}^p \frac{t^i}{i!}g_i(q_0)\right).
\end{equation}
We want to compute $\text{ad}_X^\gamma Z$ for $\gamma=0,\ldots,p$. For this, we notice 
$$
 [(e^{\varepsilon X})_* Z](e^{tX}q_0)=(e^{\varepsilon X})_*(e^{(t-\varepsilon)X})_*\left(\sum_{i=0}^p \frac{(t-\varepsilon)^i}{i!}g_i(q_0)\right)=\sum_{i=0}^p \frac{(t-\varepsilon)^i}{i!}g_i^t(e^{tX}q_0)
$$
where for the second equality we used $g_i^t=(e^{tX})_*g_i$.
We differentiate $\gamma$ times with respect to $\varepsilon$ at $0$: we obtain
$$
[\text{ad}_X^\gamma Z](e^{tX}q_0)=(-1)^\gamma\frac{d^\gamma}{d\varepsilon^\gamma}_{|\varepsilon=0} \sum_{i=0}^p \frac{(t-\varepsilon)^i}{i!}g_i^t(e^{tX}q_0)=\sum_{i=\gamma}^p \frac{t^{i-\gamma}}{(i-\gamma)!}g_i^t(e^{tX}q_0).
$$
Therefore writing $\text{ad}_X^\gamma Y=\sum_{j=1}^m \beta_{\gamma j}f_j$ where $\beta=(\beta_{kj})$ is a $(p+1)\times m$ matrix, we obtain
\begin{equation}\label{e:adXkYZ}
\text{ad}_X^\gamma (Y+Z)(e^{tX}q_0)= \sum_{j=1}^m \underbrace{\left(\beta_{\gamma j}(e^{tX}q_0)+ \sum_{i=\gamma}^p \frac{t^{i-\gamma}}{(i-\gamma)!} \alpha_{ij}\right)}_{:=\delta_{\gamma j}} f_j^t(e^{tX}q_0)
\end{equation}
thanks to \eqref{e:gitfit}.
The goal is to choose the constant-coefficients matrix $\alpha=(\alpha_{ij})$ (with $0\leq \gamma\leq p$ and $1\leq j\leq m$) in a way that $\delta=(\delta_{\gamma j})\in \mathcal{M}_{(p+1)\times m}$ defined in \eqref{e:adXkYZ} has rank $m$ at any point in $\mathcal{O}$. We notice that $\eta(t)\in \mathcal{M}_{(p+1)\times (p+1)}$ defined by its coefficients
\begin{equation}\label{e:eta}
\eta_{\gamma i}(t)=\mathbf{1}_{i\geq \gamma} \frac{t^{i-\gamma}}{(i-\gamma)!}
\end{equation}
is a triangular matrix with non-zero diagonal coefficients, hence it is invertible. Thus, $\delta=\beta+\eta\alpha$ has rank $m$ if and only if $\eta^{-1}\beta +\alpha$ has rank $m$. When $x=e^{tX}q_0$ varies in $\mathcal{O}$, $\eta(t)^{-1}\beta(e^{tX}q_0)$ describes a submanifold of $\mathcal{M}_{(p+1)\times m}$ of dimension $\leq n=\text{dim}(M)$. Hence, if $p+1\geq n+m$, then for $\alpha$ in a codimension $p+2-m-n\geq 1$ submanifold, $\delta$ has rank $m$ at any point in the neighborhood.
 
Fix such $\alpha$ and take $Z$ according to \eqref{e:gialphafj}, \eqref{e:Zfunctionalpha}. As a consequence of \eqref{e:Aatt}, \eqref{e:adXkYZ} and the fact that $\delta$ has rank $m$, we have at any point $q\in\mathcal{O}$
$$
\text{Span}\left(\text{ad}_X^0(Y+Z)(q),\ldots,\text{ad}^p_X(Y+Z)(q)\right)=\text{Span}(f_1^t(q),\ldots,f_m^t(q))= \Vec^G(\mathcal{O})_{|q}.
$$
Finally, recall that $m$ depends only on $\mathcal{K}$, therefore $p(\mathcal{K}):=n+m-1$ depends only on $\mathcal{K}$, which concludes the proof.
\end{proof}

\subsection{Proof of Theorem \ref{t:precisedstatement} and an example} \label{s:conclupro}
Each equivariant leaf is by definition a submanifold of $M$. Let $X_1,\ldots,X_k$ be a tuple satisfying the conclusion of Theorem \ref{t:genliebracket22}. We may apply Chow-Rashevskii's theorem \cite[Theorems 5.1 and 5.2]{agrachevsachkov} in any equivariant leaf $L$. We obtain that for any $q,q'\in L$, there exist $m\in\N$, $t_1,\ldots,t_m\in \R$ and $i_1,\ldots,i_m\in[k]$ (not necessarily distinct) such that $q'=e^{t_1X_{i_1}}\circ \ldots \circ e^{t_mX_{i_m}} q$. In other words, controllability in equivariant leaves holds, which concludes the proof of Theorem \ref{t:precisedstatement}.

The following example illustrates the fact that controllability does not necessarily generically hold in strata of $M$ given in \eqref{e:stratifMnonquotient}. This is because strata are possibly larger sets than equivariant leaves.
\begin{example}\label{ex:equality}
Let $m,\ell\geq 1$ and take $M=\mathbb{S}^m\times\mathbb R^\ell$ (or $M=\mathbb{S}^m\times\mathbb{T}^\ell$ with $\mathbb{T}=\R/\mathbb{Z}$ if we want $M$ to be compact) equipped with the action of the orthogonal group $G=O(m+1)$ given by $O\cdot (a,b)=(Oa,b)$ for $(a,b)\in \mathbb{S}^m\times\mathbb R^\ell$. The equivariant vector fields are all vector fields that are tangent to $\mathbb{R}^\ell$ and do not depend on the point on the sphere. There is a single stratum for \eqref{e:stratifMnonquotient}, equal to $M$, and controllability cannot hold in the whole stratum. Equivariant leaves are sets of the form $L_a=\{(a,x) \mid x\in \R^\ell\}$ for $a\in \mathbb{S}^m$, and controllability in equivariant leaves is possible (and generic according to Theorem \ref{t:precisedstatement}). Also, controllability in $M/G=\mathbb{R}^\ell$ holds since $C^\infty$ equivariant vector fields induce on $M/G\simeq \mathbb{R}^\ell$ all smooth fields.
\end{example}

\subsection{Proof of Theorem \ref{t:genequiv}}
Recall that the image of any stratum in \eqref{e:stratifMnonquotient} under quotient by $G$ is a stratum in \eqref{e:stratifM}. 
Combining Theorem \ref{t:genliebracket22} with Lemma \ref{l:fundamental} and Corollary \ref{c:M'/G}, we obtain: 
\begin{theorem}\label{t:genliebracket}
For any $k\geq 2$, there exists a set of $k$-uples $(X_1,\ldots,X_k)\in (\Vec^G(M_G))^k$ which is residual in $(\Vec^G(M))^k$ and for which
\begin{equation}\label{e:bracketgenerating}
\forall q\in M_G, \quad {\rm Lie}_q(X_1,\ldots,X_k)=T_qS_G^i.
\end{equation}
Here $i=i(q)$ denotes the index of the stratum containing $q$.
\end{theorem}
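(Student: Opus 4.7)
The plan is to deduce Theorem \ref{t:genliebracket} from Theorem \ref{t:genliebracket22} by projecting the bracket-generating identity through the quotient map $\pi\colon M \to M_G$. Pick a $k$-uple $(X_1,\ldots,X_k)$ in the residual set provided by Theorem \ref{t:genliebracket22}, and write $Y_i \in \Vec^G(M_G)$ for the vector field induced by $X_i$. Since the Lie bracket of two equivariant vector fields is again equivariant and the quotient map intertwines pushforward with bracket, at every $q \in M$ one has
\[
{\rm Lie}_{[q]}(Y_1,\ldots,Y_k) = d\pi_q\bigl({\rm Lie}_q(X_1,\ldots,X_k)\bigr),
\]
writing $[q] := \pi(q)$. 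Residuality transfers from $(\Vec^G(M))^k$ to $(\Vec^G(M_G))^k$ because the quotient $C^\infty$ topology makes the projection a continuous, open surjection, which is the precise sense in which the theorem statement is to be read.

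The substantive case is $q \in M \setminus M'$, for which Theorem \ref{t:genliebracket22} yields ${\rm Lie}_q(X_1,\ldots,X_k) = \Vec^G(M)_{|q}$. The goal then reduces to the identity
\[
d\pi_q\bigl(\Vec^G(M)_{|q}\bigr) = T_{[q]}S_G^{i([q])}.
\]
Let $S$ be the stratum of \eqref{e:stratifMnonquotient} containing $q$; then $\pi$ maps $S$ onto $S_G^{i([q])}$ and the kernel of $d\pi_q$ is $T_q(Gq)$. Lemma \ref{l:fundamental} gives
\[
\text{Im}(A_q) \subset T_qS \subset \text{Im}(A_q) + T_q(Gq),
\]
and, as recorded just before Lemma \ref{l:fundamental}, every element of $\Vec^G(M)_{|q}$ belongs to $T_qS$. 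Pushing these inclusions forward by $d\pi_q$ collapses $T_q(Gq)$ to zero and forces $d\pi_q(\text{Im}(A_q)) = d\pi_q(T_qS) = T_{[q]}S_G^{i([q])}$. The sandwich $\text{Im}(A_q) \subset \Vec^G(M)_{|q} \subset T_qS$ then yields the desired equality.

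For the remaining locus $q \in M'$, Corollary \ref{c:M'/G} asserts that $M'/G$ consists of isolated points and that each such point is itself a stratum of \eqref{e:stratifM}, so $T_{[q]}S_G^{i([q])} = \{0\}$. By the very definition of $M'$, every equivariant vector field on $M$ is tangent to $Gq$ at $q$, hence lies in $\ker(d\pi_q)$; the same therefore holds for all their iterated brackets, giving ${\rm Lie}_{[q]}(Y_1,\ldots,Y_k) = \{0\}$ automatically. Combining the two cases produces \eqref{e:bracketgenerating} at every $q \in M_G$. The only genuinely delicate point I anticipate is the formal preservation of residuality under the quotient map $\pi$; the remainder is a direct bookkeeping exercise assembling the three cited results.
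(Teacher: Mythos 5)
Your proposal is correct and follows essentially the same route as the paper, whose proof is a one-line combination of exactly the ingredients you assemble: Theorem \ref{t:genliebracket22} off $M'$, the sandwich of Lemma \ref{l:fundamental} pushed through the stratum-wise submersion $\pi|_S$ to identify $d\pi_q(\Vec^G(M)_{|q})$ with $T_{[q]}S_G^{i}$, and Corollary \ref{c:M'/G} for the isolated point-strata coming from $M'$. Your spelled-out details (bracket/$\pi$-relatedness, equivariance of iterated brackets at points of $M'$, and the reading of residuality in $(\Vec^G(M))^k$) are consistent with the paper's statement and intent.
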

Theorem \ref{t:genequiv} follows from Theorem \ref{t:genliebracket} and Chow-Rashevskii's theorem.

\section{Proof of Theorem \ref{t:Nfold}}\label{s:ensembles}
This section is devoted to the proof of Theorem \ref{t:Nfold}. Since an intersection of residual sets is still a residual set, it is sufficient to prove the result for fixed $N$, i.e., simultaneous controllability in strata for any $N$ points with $N$ fixed. Let us fix $N\in\N$. For a vector field $X\in \Vec(M)$, consider its $N$-fold, defined on the product $M^N$ as 
$$
X^N(x_1,\ldots,x_N)=(X(x_1),\ldots,X(x_N)).
$$
For $X,Y\in \Vec(M)$ and $N\geq 1$ we define the Lie bracket of the $N$-folds $X^N,Y^N$ on $M^N$ ``componentwise": $[X^N,Y^N]=[X,Y]^N$ where $[X,Y]$ is the Lie bracket of $X,Y$ on $M$. The same holds for the iterated Lie brackets.

We set
$$
M^{(N)}=\{(q_1,\ldots,q_N)\in (M\setminus M')^N \mid  \forall i\neq j,\ Gq_i\cap Gq_j=\emptyset\}.
$$
Given equivariant vector fields $X_1,\ldots,X_k$ on $M$, we say that their $N$-folds $X_1^N,\ldots,X_k^N$ form a bracket-generating system in equivariant leaves on $U\subset M^{(N)}$ if 
\begin{equation}\label{e:Nfoldbracket}
\forall (q_1,\ldots,q_N) \in U, \quad {\rm Lie}_{(q_1,\ldots,q_N)}(X_1^N,\ldots,X_k^N)=\Vec^G(M)_{|q_1}\times \ldots\times Vec^G(M)_{|q_N}.
\end{equation}
This equality is written for points in $U\subset M^{(N)}$ because it cannot hold in $M^N\setminus M^{(N)}$ due to equivariance. Notice that for $N>1$, \eqref{e:Nfoldbracket} for $U=M^{(N)}$ is strictly stronger than the property \eqref{e:bracketgenerating22} (which corresponds to the case $N=1$).

In this section we prove the following statement:
\begin{theorem}\label{t:Nfoldbracketgenerating} 
For any $N\geq 1$, there is a residual set of $k$-uples of vector fields $(X_1,\ldots,X_k)$ in $(\Vec^G(M))^k$, such that for any $(X_1,\ldots,X_k)$ from this set the $N$-folds $(X_1^N,\ldots,X_k^N)$ form a bracket generating system in equivariant leaves on $M^{(N)}$.
\end{theorem}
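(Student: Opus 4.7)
My plan is to mimic the three-step argument of Theorem \ref{t:genliebracket22} — transversality lemma, polynomial perturbation inside a tube, inductive construction over a locally finite cover — adapting each step to account for the coupling intrinsic to $N$-folds. As in the single-point case, condition \eqref{e:Nfoldbracket} is open on compact subsets of $M^{(N)}$ and $M^{(N)}$ is exhausted by countably many $G$-invariant compact sets, so the statement reduces to proving density.

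The first step is an $N$-fold transversality lemma. Repeating the codimension count of Lemma \ref{l:septransverse} on the natural stratification of $M^{(N)}$ — whose strata are intersections of products of strata of \eqref{e:stratifMnonquotient} with the open set $M^{(N)}$ — one obtains that for $(X_1,\ldots,X_k)\in\mathcal{A}^k$ in a residual set, at every $(q_1,\ldots,q_N)\in M^{(N)}$ and every $s\in[N]$ at least one $X_i$ is transverse to the $G$-orbit at $q_s$. Since the orbits $Gq_1,\ldots,Gq_N$ are pairwise disjoint closed submanifolds, one can then cover $M^{(N)}$ by a locally finite family of product neighborhoods $\mathcal{O}^{(1)}\times\cdots\times\mathcal{O}^{(N)}$, each factor being of the form \eqref{e:U} and their $G$-saturations pairwise disjoint. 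After a further generic perturbation and a refinement of the cover (using $k\geq 2$ and, if necessary, linear combinations of the $X_i$ with $G$-invariant coefficients), one may additionally arrange that a single $X_{i_j}$ is transverse on each factor of the $j$-th product tube.

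The key new ingredient is an $N$-fold analogue of Lemma \ref{l:inoneOi}. Given such pairwise $G$-disjoint tubes $\mathcal{O}^{(1)},\ldots,\mathcal{O}^{(N)}$ with common transverse field $X$, hypersurfaces $\Sigma^{(s)}$ and $Y\in\mathcal{A}$, one constructs a perturbation $Z=Z_1+\cdots+Z_N$ with each $Z_s$ supported in a small $G$-invariant neighborhood of $\overline{\mathcal{O}^{(s)}}$ chosen so that these supports remain pairwise $G$-disjoint. The crucial point is that along the $X$-flow inside $\mathcal{O}^{(s)}$ only $Z_s$ is visible, so
$$
\mathrm{ad}_{X^N}^\gamma(Y^N+Z^N)(q_1,\ldots,q_N)=\bigl(\mathrm{ad}_X^\gamma(Y+Z_1)(q_1),\ldots,\mathrm{ad}_X^\gamma(Y+Z_N)(q_N)\bigr).
$$
Each $Z_s$ is taken as the polynomial Ansatz \eqref{e:Zfunctionalpha} with its own coefficient matrix $\alpha^{(s)}$, and the resulting $(p+1)\times(Nm)$ coupling matrix $\Delta$ obtained by concatenating the $N$ single-point blocks $\delta^{(s)}=\beta^{(s)}+\eta\alpha^{(s)}$ is affine in the independent parameters $(\alpha^{(s)})_s$. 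Since $\eta$ is triangular and invertible, a dimension count (taking $p+1\geq N(n+m)$ with $n=\dim M$) shows that for generic $\alpha$ the matrix $\Delta$ has full rank $Nm$ at every $(q_1,\ldots,q_N)\in\prod_s\mathcal{O}^{(s)}$, so the iterated brackets span $\prod_s\mathrm{Vec}^G(\mathcal{O}^{(s)})_{|q_s}$.

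With this lemma in hand, the rest of the argument parallels the proof of Theorem \ref{t:genliebracket22}: enumerating the locally finite cover of $M^{(N)}$ by product tubes, one inductively modifies some $X_{\alpha_j}^{(j-1)}$ (with $\alpha_j\neq i_j$) by a perturbation $\varphi_j Z_j$, where $\varphi_j$ is a $G$-invariant cutoff supported near the $G$-saturation of the current product tube and $Z_j$ is produced by the tube lemma above. Smallness of the perturbations preserves both the transversality conditions on other tubes and the $N$-fold bracket generation already achieved on previously treated product tubes (both open conditions), and the resulting series of modifications converges in $C^{p_\ell}$ on each compact set of an exhaustion of $M^{(N)}$. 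The main obstacle is the $N$-fold tube lemma: because $N$-fold brackets couple the same sequence of operators across all $N$ components, a pointwise application of Lemma \ref{l:inoneOi} does not suffice; the resolution is to localize perturbations on pairwise $G$-disjoint supports so that the blocks $\delta^{(s)}$ at each $q_s$ can be tuned independently, at the cost of raising the polynomial degree to accommodate $Nm$ independent directions instead of $m$.
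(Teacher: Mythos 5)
Your proposal is correct and follows essentially the same route as the paper: reduce to density on a compact exhaustion of $M^{(N)}$, use Lemma \ref{l:septransverse} to get a (possibly linear-combination) field transverse to the orbits on products of pairwise $G$-disjoint tubes of the form \eqref{e:U}, perturb by the polynomial Ansatz \eqref{e:Zfunctionalpha} with independent coefficient matrices on the $N$ disjoint tubes, and conclude by a genericity/codimension count requiring $p+1\geq N(n+m)$. Your formulation of the rank condition via the concatenated $(p+1)\times Nm$ matrix is exactly the condition behind the paper's codimension count (the paper packages it as a block matrix $\delta'$), so the two arguments coincide in substance.
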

Recall that the image of any stratum in \eqref{e:stratifMnonquotient} under quotient by $G$ is a stratum in \eqref{e:stratifM}. 
Combining Theorem \ref{t:Nfoldbracketgenerating} with Lemma \ref{l:fundamental} and Corollary \ref{c:M'/G}, we obtain the following result (where the definition of $N$-fold of elements of $\Vec^G(M_G)$ is deduced from $N$-fold of elements of $\Vec^G(M)$).
\begin{theorem}\label{t:genliebracketbli}
For any $k\geq 2$, there exists a set of $k$-uples $(X_1,\ldots,X_k)\in (\Vec^G(M_G))^k$ which is residual in $(\Vec^G(M_G))^k$ and for which for any distinct $q_1,\ldots,q_N\in M_G$,
\begin{equation}\label{e:bracketgenerating}
{\rm Lie}_{(q_1,\ldots,q_N)}(X_1^N,\ldots,X_k^N)=T_{q_1}S_G^{i(q_1)}\times \ldots \times T_{q_N}S_G^{i(q_N)}.
\end{equation}
Here $i(q)$ denotes the index of the stratum in \eqref{e:stratifM} containing $q\in M_G$. 
\end{theorem}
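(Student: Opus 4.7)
The plan is to deduce Theorem~\ref{t:genliebracketbli} from Theorem~\ref{t:Nfoldbracketgenerating} by lifting to $M$, applying the $N$-fold bracket-generating theorem upstairs, and then pushing down to $M_G$ via Lemma~\ref{l:fundamental} and Corollary~\ref{c:M'/G}. Let $\pi_{\ast}:\Vec^G(M)\to\Vec^G(M_G)$ denote the quotient map; since $\Vec^G(M_G)$ carries the $C^{\infty}$ quotient topology, $\pi_{\ast}$ is an open surjection of Fréchet spaces, so residuality in $(\Vec^G(M_G))^k$ can be read off from residuality of the (saturated) preimage in $(\Vec^G(M))^k$. I would take $(X_1,\ldots,X_k)\in (\Vec^G(M))^k$ in the countable intersection (still residual) over all $N\in\N$ of the residual sets given by Theorem~\ref{t:Nfoldbracketgenerating}, and verify the required Lie bracket identity for $(\pi_{\ast}X_1,\ldots,\pi_{\ast}X_k)$ at every distinct $N$-tuple in $M_G$.

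Fix distinct $q_1,\ldots,q_N\in M_G$ and partition $[N]=I\sqcup J$, with $I=\{i:q_i\notin M'/G\}$ and $J=\{j:q_j\in M'/G\}$. For each $i\in I$ pick a representative $\tilde q_i\in M\setminus M'$; distinctness of the $q_i$ in $M_G$ implies the orbits $G\tilde q_i$ are pairwise disjoint, so $(\tilde q_i)_{i\in I}\in M^{(|I|)}$. Apply Theorem~\ref{t:Nfoldbracketgenerating} to the $|I|$-fold system:
$$
{\rm Lie}_{(\tilde q_i)_{i\in I}}(X_1^{|I|},\ldots,X_k^{|I|})=\prod_{i\in I}\Vec^G(M)_{|\tilde q_i}.
$$
Lemma~\ref{l:fundamental} identifies the image of $\Vec^G(M)_{|\tilde q_i}={\rm Im}(A_{\tilde q_i})$ under $d\pi_{\tilde q_i}$ with $T_{q_i}S_G^{i(q_i)}$: the inclusion ${\rm Im}(A_{\tilde q_i})\subseteq T_{\tilde q_i}S$ is one direction, and the inclusion $T_{\tilde q_i}S\subseteq {\rm Im}(A_{\tilde q_i})+T_{\tilde q_i}(G\tilde q_i)$ becomes an equality after quotienting by $T_{\tilde q_i}(G\tilde q_i)=\ker d\pi_{\tilde q_i}$. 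Because $d\pi$ preserves Lie brackets of $\pi$-related fields and because $N$-fold brackets act componentwise, the projection of the above identity produces exactly $\prod_{i\in I}T_{q_i}S_G^{i(q_i)}$ in the $I$-components of ${\rm Lie}_{(q_1,\ldots,q_N)}(\pi_{\ast}X_1^N,\ldots,\pi_{\ast}X_k^N)$.

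For the $J$-components, Corollary~\ref{c:M'/G} tells us that $\{q_j\}$ is itself a stratum, hence $T_{q_j}S_G^{i(q_j)}=\{0\}$. Moreover, for any $\tilde q_j\in\pi^{-1}(q_j)\subseteq M'$, every $X\in\Vec^G(M)$ satisfies $X(\tilde q_j)\in T_{\tilde q_j}(G\tilde q_j)=\ker d\pi_{\tilde q_j}$, so $\pi_{\ast}X(q_j)=0$ and the same holds for every iterated Lie bracket. Therefore the $j$-th component of every element of ${\rm Lie}_{(q_1,\ldots,q_N)}(\pi_{\ast}X_1^N,\ldots,\pi_{\ast}X_k^N)$ vanishes, matching the target $\{0\}$. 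Combining the $I$- and $J$-contributions gives the full product identity claimed in the theorem.

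The main delicate point is ensuring that residuality really does transport through $\pi_{\ast}^{\times k}$; once this is granted (using the open quotient structure on Fréchet spaces), the rest is a transparent bookkeeping step separating the points in $M'/G$, which Corollary~\ref{c:M'/G} trivializes, from the points outside $M'/G$, where Theorem~\ref{t:Nfoldbracketgenerating} applied to the reduced $|I|$-fold system in $M^{(|I|)}$, combined with the pointwise identification in Lemma~\ref{l:fundamental}, yields the desired tangent space. A minor subtlety worth flagging is that one cannot invoke Theorem~\ref{t:Nfoldbracketgenerating} directly at $(\tilde q_1,\ldots,\tilde q_N)$ when some $\tilde q_j\in M'$, which is exactly why one splits off the $J$-indices and uses the fact that the $N$-fold construction is purely componentwise to glue the two contributions.
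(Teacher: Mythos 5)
Your proposal is correct and follows essentially the same route as the paper, which obtains Theorem~\ref{t:genliebracketbli} precisely by combining Theorem~\ref{t:Nfoldbracketgenerating} with Lemma~\ref{l:fundamental} and Corollary~\ref{c:M'/G}; your splitting of indices into points outside and inside $M'/G$ and the identification $d\pi\bigl(\Vec^G(M)_{|\tilde q}\bigr)=T_qS_G^{i(q)}$ are exactly the intended bookkeeping. The one loosely stated step, transporting residuality through $\pi_*$, is safer phrased by pushing each open--dense piece down (density of images follows from continuity, surjectivity and openness of the quotient map, and openness downstairs from the fact that the pulled-back condition is open and saturated) rather than by a general ``image of a residual set is residual'' principle, but this is a detail the paper itself leaves implicit.
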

Theorem \ref{t:Nfold} is a direct consequence of Theorem \ref{t:genliebracketbli} combined with the Chow-Rashevskii theorem, as in \cite[Proposition 3.1]{agrachevsarychev}. The condition in Definition \ref{d:continstrataensemble} that the stratum has dimension $\geq 2$ is necessary because in dimension $1$, ordering of points is preserved (and indeed, $q_1,\ldots,q_N$ are assumed distinct in \eqref{e:bracketgenerating}, therefore they cannot cross).

The rest of this section is devoted to the proof of Theorem \ref{t:Nfoldbracketgenerating}. Since the proof consists in a slight modification of the proof of Theorem \ref{t:genliebracket22}, we only provide the key ideas and highlight the modifications compared to the proof of Theorem \ref{t:genliebracket22}. 

We write $M^{(N)}$ as a union of compact sets $M^{(N)}_s$, $s\in\N$, which are invariant under the action of $G^N$ given by
$$
(g_1,\ldots,g_N)\cdot (q_1,\ldots,q_N)=(g_1q_1,\ldots,g_Nq_N).
$$
It is sufficient to prove Theorem \ref{t:Nfoldbracketgenerating} in $M^{(N)}_s$ for fixed $s\in\N$, instead of $M^{(N)}$, since taking intersection over $s$ of the sets of vector fields will yield a countable intersection of residual sets, which is still a residual set. Therefore, we fix $s\in\N$ and we prove that there exists $p_s\in\N$ such that 
\begin{center}
\textit{ the set of $k$-uples of vector fields $(X_1,\ldots,X_k)$ in $(\Vec^G(M))^k$, such that \\ the length $\leq p_s$ brackets of the $N$-folds $(X_1^N,\ldots,X_k^N)$ generate in $M_s^{(N)}$ \\  all equivariant leaves is open and dense in $(\Vec^G(M))^k$.} 
\end{center}
Moreover, for any compact set $\mathcal{K}\subset M$, the regularity index $p_s$ in the above statement may be taken the same for all $s$ such that $M_s^{(N)}\subset \mathcal{K}$. 

Openness in the above statement is immediate since $M_s^{(N)}$ is compact. Therefore, we only need to prove the density.

Let us fix $(X_1,\ldots,X_k)\in (\Vec^G(M))^k$. By density of $\mathcal{A}$ in $\Vec^G(M)$ together with Lemma \ref{l:septransverse}, we may assume that $(X_1,\ldots,X_k)\in\mathcal{A}^k$ satisfy the conclusion of Lemma \ref{l:septransverse}, i.e., 
\begin{equation}\label{e:transverseatx}
\forall x\in M\setminus M', \exists i\in[k] \text{ such that } X_i(x) \text{ is transverse to the $G$-orbit at $x$.} 
\end{equation}

For each $(q_1,\ldots,q_N)\in M_s^{(N)}$ we do the following. Thanks to  \eqref{e:transverseatx} we fix a linear combination $a_1X_1+\ldots+a_kX_k$ which is transverse to the $G$-orbit at $q_i$ for any $i\in[N]$. Then we consider a product of tubular neighborhoods 
\begin{equation}\label{e:Oiunion}
\mathcal{O}_1\times \ldots \times \mathcal{O}_N\subset M^{(N)}
\end{equation}
where $q_\ell\in\mathcal{O}_\ell$, each $\mathcal{O}_\ell$ is of the form \eqref{e:U}, and the $\mathcal{O}_\ell$'s have empty intersection, which is possible since $q_1,\ldots,q_N$ are pairwise distinct. We assume that the neighborhood \eqref{e:Oiunion} is sufficiently small so that the linear combination $a_1X_1+\ldots+a_kX_k$ is transverse in $\mathcal{O}_1\cup \ldots\cup \mathcal{O}_N$ to the $G$-orbits.

Doing this for any $(q_1,\ldots,q_N)\in M_s^{(N)}$, we have obtained an open covering of the compact set $M_s^{(N)}$, from which we select a finite sub-covering $\widetilde{V}_1,\ldots,\widetilde{V}_{j_0}$. For fixed $j\in[j_0]$, we have by definition
\begin{equation}\label{e:disjointOi}
\widetilde{V}_j= \mathcal{O}_1^{(j)}\times \ldots \times \mathcal{O}_N^{(j)}
\end{equation}
where each $\mathcal{O}_\ell^{(j)}$ is of the form \eqref{e:U} (with $t\in ]-T_\ell^{(j)},T_\ell^{(j)}[$ and $\Sigma_\ell^{(j)}$ denotes the hypersurface), and $\mathcal{O}_\ell^{(j)}$, $\mathcal{O}_{\ell'}^{(j)}$ are separated (``at positive distance") for any $\ell\neq \ell'$. For any $j\in[j_0]$, there exist $a_1^{(j)},\ldots,a_k^{(j)}\in\R$ such that $a_1^{(j)}X_1+\ldots+a_k^{(j)}X_k$ is transverse to the $G$-orbits in 
\begin{equation}\label{e:tildeVj}
V_j=\bigsqcup_{\ell=1}^N  \mathcal{O}_\ell^{(j)}.
\end{equation}
(Notice that the $V_j$'s do not necessarily have empty intersection.)

We perturb $(X_1,\ldots,X_k)$ in $V_1,\ldots,V_{j_0}$ successively; at step $j$ we perturb the vector fields in $V_j$, and the vector fields which we obtain at the end of this step are denoted by $(X_{1}^{(j)},\ldots,X_{k}^{(j)})$. 

We need to modify the construction made in the proof of Lemma \ref{l:inoneOi}. Fix a step $j\in[j_0]$. We set $X=a_1^{(j)}X_1+\ldots+a_k^{(j)}X_k$ and 
$$
Y=\begin{cases}
X_1 \text{ if } (a_1^{(j)},\ldots,a_k^{(j)})\neq (\lambda,0,\ldots,0) \text{  for any $\lambda\in\R$}\\
X_2 \text{ otherwise.}
\end{cases}
$$
We construct below a perturbation $Z$ of $Y$, supported near $V_j$. We use the same idea of Taylor expansion as in \eqref{e:Zfunctionalpha}, just taking larger $p$ in order to generate all directions in the tangent space at any $(e^{t_1X}q_1,\ldots,e^{t_NX}q_N)$.
We introduce the set $\mathcal{C}$ of parameters $(\mathbf{t},\mathbf{q})$ such that $\mathbf{t}=(t_1,\ldots,t_N)$ with $t_\ell\in]-T_\ell^{(j)},T_\ell^{(j)}[$, and $\mathbf{q}=(q_1,\ldots,q_N)\in \Sigma_1^{(j)}\times\ldots\times \Sigma_N^{(j)}$. 

The vector field $Z$ is constructed in the following way. We pick analytic vector fields $f_1,\ldots,f_m$ as in the proof of Lemma \ref{l:inoneOi}, in order for \eqref{e:finitelygenerated} to hold. We follow \eqref{e:deffit} to \eqref{e:gitfit} in all $\mathcal{O}=\mathcal{O}_\ell^{(j)}$, $i=1,\ldots,N$, and define $Z$ through \eqref{e:Zfunctionalpha}: for each $\ell\in[N]$ there is a matrix $\alpha=\alpha^{(\ell)}$, which defines $Z$ in $\mathcal{O}_\ell^{(j)}$. The vector field $Z$ is thus supported near $V_j$. An analogous formula to \eqref{e:adXkYZ} holds:
\begin{equation}\label{e:newperturb}
\text{ad}_X^\gamma (Y+Z)(e^{t_\ell X}q_\ell )= \sum_{r=1}^m \underbrace{\left(\beta_{\gamma r}^{(\ell)}(e^{t_\ell X}q_\ell )+ \sum_{i=\gamma }^p \frac{t^{i-\gamma}}{(i-\gamma)!} \alpha_{ir}^{(\ell)}\right)}_{:=\delta_{\gamma r}^{(\ell)}} f_r^t(e^{t_\ell X}q_\ell)
\end{equation}

The goal is to choose the constant-coefficients matrices $\alpha^{(\ell)}=(\alpha^{(\ell)}_{ir})$ (with $0\leq \gamma\leq p$ and $r\in[m]$, $\ell\in[N]$) in a way that the  $N(p+1)\times Nm$ block matrix
$$
\delta'=\begin{pmatrix}
\delta^{(1)} & \mathbf{0} & \ldots & \mathbf{0} \\
\mathbf{0} & \delta^{(2)} & \ldots & \mathbf{0} \\
\vdots & \vdots & \ddots &\vdots\\
\mathbf{0} & \mathbf{0}& \ldots & \delta^{(N)}
\end{pmatrix}
$$
(whose $N^2$ blocks are of size $(p+1)\times m$) has rank $Nm$ for any $(\mathbf{t},\mathbf{q})\in\mathcal{C}$. 
 We denote by $\eta'$ the $N(p+1)\times N(p+1)$ matrix defined by $N^2$ blocks of size $(p+1)\times (p+1)$, all equal to $0$ except the on-diagonal ones taken equal to the matrix $\eta$ introduced in \eqref{e:eta}. We define similarly $\beta'$ and $\alpha'$ as  $N(p+1)\times Nm$ block matrices having blocks equal to $0$ except $\ell^{\rm th}$ diagonal block equal respectively to $\beta^{(\ell)}$ and $\alpha^{(\ell)}$ which appear in \eqref{e:newperturb}. We obtain $\delta'=\beta'+\eta'\alpha'$.

 Since $\eta'$ is invertible, $\delta'$ has rank $Nm$ if and only if $\eta'^{-1}\beta' +\alpha'$ has rank $Nm$. When $(\mathbf{t},\mathbf{q})$ vary in $\mathcal{C}$, $\eta'^{-1}\beta'$ describes a submanifold of $\mathcal{M}_{(p+1)N\times mN}$ of dimension $\leq nN$, where $n=\text{dim}(M)$. Hence, if $p+1\geq (m+n)N$, then for $\alpha$ in a codimension $p+2-(m+n)N\geq 1$ submanifold, $\delta'$ has rank $mN$ at any point in the neighborhood.

For $\alpha'$ and $Z$ taken in this way, we obtain that the Lie algebra generated by the (restrictions of the) $N$-folds vector fields $X_1^N,\ldots,X_k^N$ equals $\Vec^G(\mathcal{O}_1)\times \ldots \times \Vec^G(\mathcal{O}_N)$ - this is a direct consequence of the fact that $\delta'$ has rank $Nm$ together with \eqref{e:Aatt}. 

Besides, we take the perturbations $\alpha'$ and $Z$ sufficiently small so that the condition that $a_1^{(j')}X_1+\ldots+a_k^{(j')}X_k$ is transverse to the $G$-orbits in $V_{j'}$ for any $j'\in[j_0]$ is preserved.

By definition, if we set $X_1'=X_{1,j_0}$ and $X_2=X_{2,j_0}$, then their $N$-folds $X_1'^{(N)}$ and $X_2'^{(N)}$ form a bracket-generating system in equivariant leaves in all $V_j$ ($j\in[j_0]$), hence in $M^{(N)}_s$.  This concludes the proof of Theorem \ref{t:Nfoldbracketgenerating}.

\end{document}